\pdfoutput=1
\documentclass[12pt, reqno]{amsart}
\usepackage{graphicx}
\usepackage{tikz}
\usepackage{caption}
\usepackage[margin=1in]{geometry}
\usepackage[labelfont=rm]{subcaption}
\usepackage{blindtext}
\numberwithin{equation}{section}
\usepackage[T1]{fontenc}
\usepackage{soul, color}
\usepackage{amsmath}
\usepackage{amssymb}
\usepackage{enumitem}
\usepackage{hyperref}

\usepackage{amsthm}
\usepackage{setspace}


\usepackage{mathtools}
\usepackage{commath}
\usepackage[numbers,sort]{natbib}
\usepackage[bottom]{footmisc}
\newcommand{\pr}[1]{#1^{\prime}}
\newcommand{\x}{\mathbf{x}}
\newcommand{\iid}{\mathrm{i.i.d}}
\newcommand{\Var}{\mathrm{Var}}
\newcommand{\Cov}{\mathrm{Cov}}

\newcommand{\R}{\mathbb{R}}
\newcommand{\one}{{\bf 1}}
\def\P{\mathbb{P}}
\def\E{\mathbb{E}}
\newcommand{\Pn}{\mathcal{P}_n}

\newcommand{\X}{\mathcal{X}}
\newcommand{\Y}{\mathcal{Y}}

\newcommand{\Nat}{\mathbb{N}}
\newcommand{\y}{\mathbf{y}}

\newcommand{\VR}{\mathcal R}
\newcommand{\ind}[1]{\mathbf{1}\big\{#1\big\}}

\newcommand{\geom}[1]{\mathrm{geom}(#1)}
\newcommand{\id}{\text{id}}
\newcommand{\card}[1]{|#1|}

\newcommand{\ed}[1]{{\color{red}#1}}
\setstcolor{red}

\newcommand{\remove}[1]{}

\newcommand{\xn}{\bar{\chi}_n}
\newcommand{\xnA}{\bar{\chi}_{n,A}}

\newcommand{\barxi}{\bar{\xi}}

\newcommand{\bx}{{\bf x}}
\newcommand{\by}{{\bf y}}

\newcommand{\bb}{{\bf b}}

\newcommand{\B}{\mathcal B}

\newcommand{\kokt}{k_1+k_2}

\theoremstyle{plain}
\newtheorem{thm}{Theorem}[section]
\newtheorem{lem}[thm]{Lemma}
\newtheorem{prop}[thm]{Proposition}

\theoremstyle{defn}
\newtheorem{defn}[thm]{Definition}
\newtheorem{rem}[thm]{Remark}
\newtheorem{ex}[thm]{Example}

%
%

\begin{document}

\title[Limit theorems for the Euler characteristic process]{Functional limit theorems for the Euler characteristic process in the critical regime} 
\author{Andrew M. Thomas and Takashi Owada}

\address{Department of Statistics\\
Purdue University \\
IN, 47907, USA}
\email{thoma186@purdue.edu \\ owada@purdue.edu}

\thanks{This research is partially supported by the NSF grant DMS-1811428.}

\subjclass[2010]{Primary 60F17. Secondary 55U10, 60C05, 60D05.}
\keywords{Functional central limit theorem, Functional strong law of large numbers, Euler characteristic, geometric simplicial complex.}

\begin{abstract}
This study presents functional limit theorems for the Euler characteristic of Vietoris-Rips complexes. The points are drawn from a non-homogeneous Poisson process on $\R^d$, and the connectivity radius governing the formation of simplices is taken as a function of time parameter $t$, which allows us to treat the Euler characteristic as a stochastic process. The setting in which this takes place is that of the critical regime, in which the simplicial complexes are highly connected and have non-trivial topology. 
We establish two ``functional-level" limit theorems, a strong law of large numbers and a central limit theorem for the appropriately normalized Euler characteristic process.
\end{abstract}

\maketitle

\section{Introduction}

 The Euler characteristic is one of the oldest and simplest topological summaries. It is at once local and global, combinatorial and topological, owing to its representation as either the alternating sum of Betti numbers of a topological space, or the alternating sum of simplices in its triangulation. Beyond its theoretical beauty, the Euler characteristic has recently made its way into the field of applied mathematics, notably topological data analysis (TDA). For instance, the Euler characteristic of sublevel (or superlevel) sets of random fields have found broad applications \cite{Adler2008, Crawford2016}. 
In TDA, the technique of capturing the dynamic evolution of topology is generally studied in \emph{persistent homology}---see \cite{Carlsson2009} for a good introduction. Persistent homology originated in computational topology \cite{edelsbrunner2010} and has received much attention as a useful machinery for exploring the manner in which topological holes appear and/or disappear in a filtered topological space. The primary objective of the current study is to associate the Euler characteristic with some filtered topological space by treating it as a stochastic process in time parameter $t$. 
 
Due to recent rapid development of TDA in conjunction with probability theory, there has been a growing interest in the study of random geometric complexes. We focus on the \emph{Vietoris-Rips complex} \cite{kahle2013limit, Kahle2011, owada:2019}, due to its ease of application, especially those in computational topology; though much research has also been done on the \emph{{\v C}ech complex} \cite{kahle2013limit, Kahle2011, owathom, Yogeshwaran2017, decreusefond2014simplicial, bobrowski2011distance, Bobrowski2015, Goel2018}, or the notion of generalizing both types of complex \cite{Duy2016}.
An elegant survey of progress these areas can be found in  \cite{Bobrowski2018}. 
These studies are mostly concerned with the asymptotic behavior of topological invariants such as the Euler characteristic and Betti numbers. Among them, \cite{decreusefond2014simplicial} derived a concentration inequality for the Euler characteristic built over a \v{C}ech complex on a $d$-dimensional torus, as well as its asymptotic mean and variance; and \cite{hug:last:schulte:2016} established a multivariate central limit theorem for the intrinsic volumes, including the Euler characteristic. Furthermore, \cite{schneider:weil:2008} proved ergodic theorems for the Euler characteristic over a stationary and ergodic point process. 

Most of the studies cited in the last paragraph start with either an iid random sample $\mathcal X_n=\{ X_1,\dots,X_n \}$ or a Poisson point process $\Pn=\{ X_1,\dots,X_{N_n} \}$, where $N_n$ is a Poisson random variable with mean $n$, independent of $(X_i)_i$. Subsequently, we will consider a simple Boolean model of the union of balls centered around $\mathcal X_n$ or $\Pn$ with a sequence of non-random radii $s_n\to 0$, $n\to\infty$.  Then, the behavior of topological invariants based on the Boolean model can be split up into several distinct regimes. When $ns_n^d \to 0$, $n\to\infty$, we have what is called the \emph{sparse} (or subcritical) regime, in which there occur many small  connected components. If $n s_n^d \to \infty$ as $n\to\infty$,  we have the \emph{dense} (or supercritical) regime, which is characterized by a large connected component with few topological holes as a result of a slower decay rate of $s_n$. An intermediate case for which $n s_n^d$ converges to a positive and finite constant, is called the \emph{critical} regime, in which the stochastic features of a geometric complex are less assured, and are arguably more interesting, due to the emergence of highly connected components with non-trivial topologies. The present study focuses exclusively on the critical regime. This is because the behaviors of the Euler characteristic in other regimes, e.g., sparse and dense regimes, are considerably trivial. For example, in the dense regime, the Euler characteristic is asymptotic to $1$ (see \cite{bobrowski2011distance}).
 
Within the context of geometric complexes---such as the \v{C}ech and Vietoris-Rips complexes---few attempts have been made thus far at deriving limit theorems on the functional level for topological invariants (with a few exceptions---see \cite{owathom, owada:2019, biscio2019}). From the viewpoint of persistent homology, such functional information is crucial for the understanding of topological invariants in a filtered topological space. 
With this in mind, the current study proceeds to establish functional limit theorems for the Euler characteristic defined as a stochastic process. 
More specifically, we shall prove a functional strong law of large numbers and a functional central limit theorem in the space $D[0,\infty)$ of right continuous functions with left limits. Our results are the first functional limit theorems in the literature for a topological invariant under the critical regime that have neither time/radius restrictions nor restriction on the number/size of components in the underlying simplicial complex.
The primary benefit in our results lies in information obtainable about topological changes as time parameter $t$ varies. For example, if we let $\chi_n(t)$ be the Euler characteristic considered as a stochastic process, then as consequences of our main theorems, one can capture the limiting behavior of various useful functions of the Euler characteristic process via the continuous mapping theorem. We elaborate on these at the end of Section \ref{results}. Other potential applications can be found in Chapter 14 of \cite{Billingsley} and \cite{whitt:2002}. 

In section \ref{preliminaries} we discuss all the topological background necessary for the paper. In section \ref{results} we discuss our main results: the functional strong law of large numbers and functional central limit theorems for the Euler characteristic process in the critical regime. All of the proofs in the paper are collected in Section \ref{proofs}.

\section{Preliminaries} \label{preliminaries}

\subsection{Topology}  

The main concept in the present paper is the Euler characteristic. Before introducing it we begin with the notions of a \emph{simplex} and an \emph{(abstract) simplicial complex}. Let $\Nat$, $\Nat_0$  be the positive and non-negative integers respectively, and $B(x,r)$ be the closed ball centered at $x$ with radius $r \ge 0$. 

\begin{defn} \label{d:abs_simp}
Let $\X$ be a finite set. An \emph{abstract simplicial complex} $\mathcal{K}$ is a collection of non-empty subsets of $\X$ which satisfy
\begin{enumerate}
\item All singleton subsets of $\X$ are in $\mathcal{K}$,
\item If $\sigma \in \mathcal{K}$ and $\tau \subset \sigma$, then $\tau \in \mathcal{K}$. 
\end{enumerate}
If $\sigma \in \mathcal{K}$ and $\card{\sigma} = k + 1$, with $k \in \Nat_0$, then $\sigma$ is said to have dimension $k$ and is called a \textit{$k$-simplex} in $\mathcal{K}$. The dimension of $\mathcal{K}$ is the dimension of the largest simplex in $\mathcal{K}$.
\end{defn}
It can be shown (cf.~\cite{edelsbrunner2010}) that every abstract simplicial complex $\mathcal{K}$ of dimension $d$ can be embedded into $\R^{2d+1}$. The image of such an embedding, denoted $\geom{\mathcal{K}}$, is called the \emph{geometric realization} of $\mathcal{K}$. A topological space $Y$ is said to be \emph{triangulable} if there exists a simplicial complex $\mathcal{K}$ together with a homeomorphism between $Y$ and $\geom{\mathcal{K}}$. 
We now define the Euler characteristic. 

\begin{defn} \label{d:euler}
Take $\mathcal{K}$ to be a simplicial complex and let $S_k(\mathcal{K})$ be the number of $k$-simplices in $\mathcal{K}$. Then the \emph{Euler characteristic} of $\mathcal{K}$ is defined as 
$$ 
\chi(\mathcal{K}) := \sum_{k=0}^{\infty} (-1)^k S_k(\mathcal{K}).
$$
\end{defn}

If $Y$ is a triangulable topological space with an associated simplicial complex $\mathcal K$, then we have $\chi(Y) = \chi(\mathcal{K})$, and $\chi(Y)$ is independent of the triangulation (see Theorem 2.44 in \cite{hatcher}). Therefore, the Euler characteristic is a topological invariant (and in fact a homotopy invariant). 

\begin{figure}[!t]
\begin{center}
\includegraphics[width=6.5in]{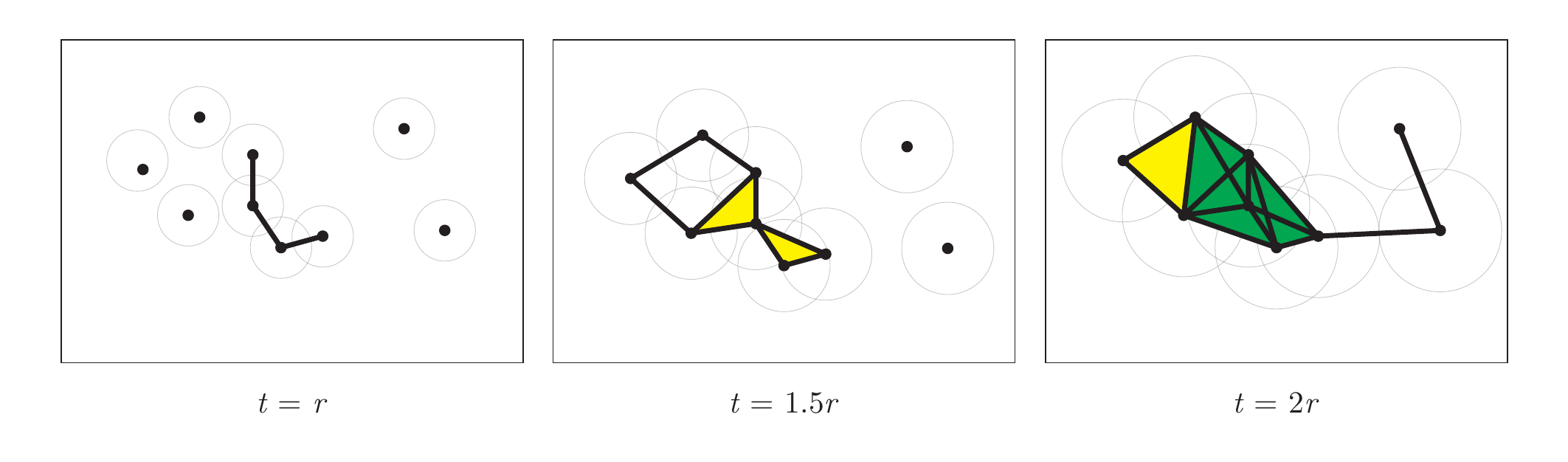}
\caption{A family of Vietoris-Rips complexes.}
\label{f:vietoris_rips}
\end{center}
\end{figure}

Our setting for this study is always in $\R^d$, so we may take $\X$, $\Y$ to be arbitrary finite subsets of $\R^d$. To conclude this section, we will now define the Vietoris-Rips complex: the aforementioned simplicial complex that allows us to get a topological, as well as combinatorial, structure from our data $\X$. A family of Vietoris-Rips complexes $( \VR(\X, t), \, t \ge 0 )$ for points in $\R^2$ can be seen in Figure~\ref{f:vietoris_rips}; yellow represents a 2-simplex and green represents a 3-simplex, which cannot be embedded in $\R^2$.
\begin{defn}
Let $\X = \{x_1, \dots, x_n\}$ be a finite subset of $\R^d$ and $t\ge0$. The \emph{Vietoris-Rips complex} $\VR(\X,t)$ is the (abstract) simplicial complex where 
\begin{enumerate}
\item All singleton subsets of $\X$ are in $\VR(\X,t)$,
\item A $k$-simplex $\sigma = \{x_{i_0}, \dots, x_{i_k}\}$ is in $\VR(\X,t)$ if 
\[
B(x_{i_j}, t) \cap B(x_{i_\ell}, t) \neq \emptyset
\]
for all $0 \leq j < \ell \leq k$. 
\end{enumerate}
\end{defn}

\subsection{Tools}
Throughout, we denote $\Pn$ to be a Poisson point process on $\R^d$ with intensity measure $n\int_Af(x)\dif x$, where $A$ is a Borel subset of $\R^d$, and $f$ is a probability density function. Writing $m$ for Lebesgue measure on $\R^d$, we assume that $f$ is bounded almost everywhere, i.e., $\lVert f \rVert_{\infty} := \inf \big\{a \in \R: m\big(f^{-1}(a, \infty)\big) = 0\big\}<\infty$. 


For two finite subsets $\Y\subset \X$ of $\R^d$ with $|\Y|=k+1$, and $t\ge0$, we define 
\begin{align}
h_t^{k} (\Y) &:=  \one \big\{ \Y \text{ forms a } k\text{-simplex in } \VR (\X,t)  \big\}  = \prod_{x, y \in \Y, \,  x \neq y} \one \Big\{ B(x, t) \cap B(y, t) \neq \emptyset \Big\}. \label{e:def.indicator}
\end{align}
In the below we present obvious, but highly useful properties of this indicator function. First, it is translation and scale invariant: for any $c>0$, $x\in\R^d$, and $y_0, \dots, y_k \in \R^d$, 
\[
h^{k}_t(cy_0 + x, \dots, cy_k + x) = h^{k}_{t/c}(y_0, \dots, y_k).
\]
Furthermore, for any fixed $y_i \in \R^d$, $i=0,\dots,k$, it is non-decreasing in $t$, i.e., 
\begin{equation}  \label{e:monotonicity.indicator}
h_s^{k}(y_0,\dots,y_k) \le h_t^{k}(y_0, \dots, y_k), \ \ \ 0 \le s \le t. 
\end{equation}
\remove{Finally, we have that
\begin{equation}
h_t^{k, \ell}(y_0, \dots, y_k) \leq h_t^{k, \id \wedge \one}(y_0, \dots, y_k), \ \ t \ge 0, \   y_i \in \R^d, \ i=0,\dots,k.  \label{e:hkl_prop_2}
\end{equation}}

Using \eqref{e:def.indicator}, we can define $k$-simplex counts by 
$
S_k(\X, t) := \sum_{\Y \subset \X} h_t^k (\Y). 
$
As declared in the Introduction, we shall exclusively focus on the critical regime, so that $ns_n^d\to1$, $n\to \infty$. Finally, in order to formulate the Euler characteristic as a stochastic process, let $r_n(t):=s_nt$ and define 
\begin{equation}  \label{e:Euler.characteristic.proc}
\chi_n(t) := \sum_{k=0}^\infty (-1)^k S_k \big( \Pn, r_n(t) \big) = \sum_{k=0}^\infty (-1)^k \sum_{\Y \subset \Pn} h_{r_n(t)}^k (\Y), \ \ t \ge 0. 
\end{equation}
Notice that \eqref{e:Euler.characteristic.proc} is almost surely a finite sum because the cardinality of $\Pn$, denoted as $|\Pn|$, is finite a.s.~and $S_k\big( \Pn, r_n(t) \big)\equiv 0$ for all $k \ge |\Pn|$. Furthermore, for a Borel subset $A$ of $\R^d$, define  a restriction of the Euler characteristic to $A$ by
\begin{equation}  \label{e:left.most.Euler.characteristic}
\chi_{n,A}(t) := \sum_{k=0}^\infty (-1)^k \sum_{\Y \subset \Pn} h_{r_n(t)}^k (\Y) \one \big\{  \text{LMP}(\Y)\in A \big\}, 
\end{equation}
where $\text{LMP}(\Y)$ represents the left-most point of $\Y$, i.e., the least point with respect to lexicographic order in $\R^d$. This restriction is useful for proving finite-dimensional convergence in the case when $A$ is bounded. When $A$ is bounded we get a finite number of random variables for the dependency graph, so that we may use Stein's method for normal approximation. See Section \ref{sec:fidi.dim.proof} for more details. Clearly, $\chi_{n,\R^d}(t) = \chi_n(t)$. 

\section{Main results} \label{results}

The first contribution of the present paper is the functional strong law of large numbers (FSSLN) for $\chi_n$ in the space $D[0,\infty)$ of right continuous functions with left limits. More precisely, almost sure convergence of $\chi_n/n$ to the limiting mean will be established in terms of the uniform metric. Our proof techniques rely on the Borel-Cantelli lemma to prove a strong law of large numbers for each fixed $t$ and we then extend this to the functional case. As for the method of proofs in other studies, \cite{penr} and \cite{Yogeshwaran2017} have established concentration inequalities that can lead to the desired (static) strong law of large numbers. Although these concentration inequalities can yield sharper bounds, a downside is that extra conditions need to be put on an underlying density $f$. For example $f$ must have bounded support. For this reason, we have adopted a different approach via the Borel-Cantelli lemma, by which one can prove $n^{-1}\big(\chi_n(t) - \E[\chi_n(t)]\big) \to 0$ a.s.~by showing that the sum of the fourth moments is convergent. The relevant article taking an approach similar to ours is \cite{Goel2018}. 

The second contribution of this paper is to show the weak convergence of the process 
\[  
\xn(t) := n^{-1/2}\big( \chi_n(t)-\E[\chi_n(t)] \big), \ \ \ t \ge 0,
\]
with respect to the Skorohod $J_1$-topology. Proving finite-dimensional weak convergence of $\xn$ in conjunction with its tightness will allow us to obtain the desired convergence in $D[0, \infty)$. Finite-dimensional convergence will be established 
via the Cram{\'e}r-Wold device and Stein's method as in Theorem 2.4 in \cite{penr} by adhering closely to the proof of Theorem 3.9 in the same source. 
In addition, the tightness will be proven via Theorem 13.5 in \cite{Billingsley}.
These functional limit theorems enable us to capture dynamic features of topological changes in $D[0,\infty)$. The proofs for all results in this section are postponed to Section \ref{proofs}. 

\bigskip
In order to obtain a clear picture of our limit theorems, it would be beneficial to start with some results on asymptotic moments of $\chi_n$. Define for $k_1, k_2 \in \Nat_0$, $t, s \ge 0$, and a Borel subset $A$ of $\R^d$,  
\begin{equation*} 
\Psi_{k_1,k_2,A}(t,s) := \sum_{j=1}^{(k_1 \wedge k_2)+1} \psi_{j,k_1,k_2,A}(t,s), 
\end{equation*}
where $k_1 \wedge k_2 = \min\{ k_1, k_2 \}$, and 
\begin{align*}
\psi_{j,k_1,k_2,A}(t,s) &:= \frac{\int_A f(x)^{k_1+k_2+2-j}\dif x}{j! (k_1+1-j)! (k_2+1-j)!} \\ 
&\quad \times 
\int_{(\R^d)^{k_1 + k_2+1 -j}} \hspace{-10pt} h_t^{k_1}(0,y_1,\dots,y_{k_1})h_s^{k_2}(0,y_1,\dots, y_{j-1}, y_{k_1+1}, \dots, y_{k_1+k_2+1-j})\dif \by. 
\end{align*}
Here we set $h_t^k(0,y_1,\dots,y_k)=1$ if $k=0$, so that $\Psi_{0,0,A}(t,s)=\psi_{1,0,0,A}(t,s)=\int_Af(x)dx$. In the sequel, we write $\Psi_{k_1,k_2}(t,s) := \Psi_{k_1,k_2,\R^d}(t,s)$ with $\psi_{j,k_1,k_2}(t,s) := \psi_{j,k_1,k_2,\R^d}(t,s)$. 
\begin{prop}  \label{p:cov.asym}
For $t,s \ge 0$, $A \subset \R^d$ open with $m(\partial A) = 0$, we have
\begin{align}
n^{-1} \E[\chi_{n,A}(t)] &\to \sum_{k=0}^\infty (-1)^k \psi_{k+1, k, k, A}(t,t), \ \ n\to\infty, \label{e:expectation.asym}\\
n^{-1} \Cov \big( \chi_{n,A}(t), \chi_{n,A}(s) \big) &\to \sum_{k_1=0}^\infty \sum_{k_2=0}^\infty (-1)^{k_1+k_2} \Psi_{k_1,k_2,A}(t,s), \ \ n\to\infty, \label{e:cov.asymp} 
\end{align}
so that both of the right hand sides are convergent for every such $A\subset \R^d$. 
\end{prop}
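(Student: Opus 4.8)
The plan is to compute the expectation and covariance of $\chi_{n,A}(t)$ exactly using the multivariate Mecke formula for Poisson processes, identify the leading-order term in $n$, and show the remainder vanishes. Throughout I will exploit the decomposition $\chi_{n,A}(t) = \sum_k (-1)^k S_{k,A}(\Pn,r_n(t))$, where $S_{k,A}$ counts $k$-simplices with left-most point in $A$, together with the fact that $S_{k,A}\equiv 0$ for $k \geq |\Pn|$ (so all sums over $k$ are really finite a.s.) and, more importantly, the geometric fact that a $k$-simplex in $\VR(\Pn,r_n(t))$ has all its vertices within distance $2r_n(t)$ of one another, so that after rescaling by $s_n$ the spatial integrals become $O(1)$ and only finitely many $k$ contribute in an integrable way.

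\textbf{Expectation.} First I would apply Mecke's formula to $\E[S_{k,A}(\Pn,r_n(t))] = \E\big[\sum_{\Y\subset\Pn}h^k_{r_n(t)}(\Y)\one\{\mathrm{LMP}(\Y)\in A\}\big]$, which equals
\begin{equation*}
\frac{n^{k+1}}{(k+1)!}\int_{(\R^d)^{k+1}} h^k_{r_n(t)}(x_0,\dots,x_k)\one\{\mathrm{LMP}(x_0,\dots,x_k)\in A\}\prod_{i=0}^k f(x_i)\,\dif x_i.
\end{equation*}
Using translation–scale invariance of $h^k$ I substitute $x_0 = x$, $x_i = x + r_n(t)y_i$, pull out the Jacobian $r_n(t)^{dk} = s_n^{dk}t^{dk}$, and get a factor $n^{k+1}s_n^{dk} = n\cdot(ns_n^d)^k \to n$ in the critical regime. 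The left-most-point indicator, under the continuity of $f$ and $m(\partial A)=0$, forces the $x$-integral to converge to $\int_A f(x)^{k+1}\dif x$ (the other density factors $f(x+r_n(t)y_i)\to f(x)$ for a.e.\ $x$ by Lebesgue differentiation / bounded convergence, since $\|f\|_\infty<\infty$), and the $\by$-integral converges to $\frac{1}{k!}\int_{(\R^d)^k}h^k_t(0,y_1,\dots,y_k)\,\dif\by$, which one recognizes as $\psi_{k+1,k,k,A}(t,t)$ up to the stated constants. Summing the alternating series over $k$ and justifying the interchange of limit and summation by a dominated-convergence argument — the dominating bound coming from the fact that $h^k_t(0,y_1,\dots,y_k)\le \prod_{i=1}^k\one\{|y_i|\le 2t\}$ gives $\int h^k_t \le (\text{const}\cdot t^d)^k/k!$-type decay, which is summable — yields \eqref{e:expectation.asym}.

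\textbf{Covariance.} For the covariance I would write $\Cov(\chi_{n,A}(t),\chi_{n,A}(s)) = \sum_{k_1,k_2}(-1)^{k_1+k_2}\Cov\big(S_{k_1,A}(\Pn,r_n(t)),S_{k_2,A}(\Pn,r_n(s))\big)$ and apply the bivariate Mecke formula to each covariance term. The key combinatorial point is that $\Cov(\sum_{\Y}g(\Y),\sum_{\Z}g'(\Z))$ for Poisson functionals decomposes according to $j := |\Y\cap\Z| \geq 1$ (the $j=0$ contribution cancels between the product term and the $\E[\,\cdot\,]\E[\,\cdot\,]$ term, leaving only the "diagonal overlap" pieces). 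This produces exactly the sum $\sum_{j=1}^{(k_1\wedge k_2)+1}$ of integrals over $(\R^d)^{k_1+k_2+2-j}$ with $n^{k_1+k_2+2-j}$ out front; the rescaling trick turns $n^{k_1+k_2+2-j}s_n^{d(k_1+k_2+1-j)} = n\cdot(ns_n^d)^{k_1+k_2+1-j}\to n$, the shared point integrates against $f^{k_1+k_2+2-j}$ over $A$, and the remaining $k_1+k_2+1-j$ relative coordinates integrate against the product of the two indicators as in the definition of $\psi_{j,k_1,k_2,A}$. Interchanging the limit with the double sum over $k_1,k_2$ and the finite sum over $j$ again requires a summability bound, obtained from the same $\int h^{k}_t \le C^k/k!$ estimate applied to both indicators in $\psi_{j,k_1,k_2,A}$.

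\textbf{Main obstacle.} The routine parts are the Mecke computations and the rescaling. The genuinely careful step — and the one I expect to be the main obstacle — is the \emph{uniform domination} needed to pass the limit inside the infinite sums over $k$ (for the mean) and over $(k_1,k_2)$ (for the covariance), simultaneously controlling (i) the pre-limit integrals $n^{-1}\E[S_{k,A}(\Pn,r_n(t))]$ by a summable sequence independent of $n$, which needs a uniform-in-$n$ upper bound on $\prod f(x+r_n(t)y_i)$ — handled by $\|f\|_\infty<\infty$, bounding all but one density factor by $\|f\|_\infty$ and keeping one $f(x)$ for the $x$-integral over $A$ to be finite — and (ii) the combinatorial growth of the number of overlap patterns in the covariance, which is where the factorial denominators $j!(k_1+1-j)!(k_2+1-j)!$ are essential. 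A secondary delicate point is justifying pointwise convergence of the density factors $f(x+r_n(t)y_i)\to f(x)$ inside the $x$-integral: this holds at every Lebesgue point of $f$, hence a.e., so bounded convergence applies, but it must be stated; likewise the left-most-point indicator converges a.e.\ precisely because $m(\partial A)=0$. Once these dominations are in place, Fatou/dominated convergence closes both \eqref{e:expectation.asym} and \eqref{e:cov.asymp}, and the finiteness of the right-hand sides is a byproduct of the same bounds.
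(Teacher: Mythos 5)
Your proposal is correct and follows essentially the same route as the paper: what you call the multivariate Mecke formula is the Palm theory of Lemma \ref{l:palm}, the overlap decomposition with the $j=0$ term cancelling against the product of expectations is exactly how the paper computes the covariance, and your uniform domination (one retained factor of $f$, the rest bounded by $\|f\|_\infty$, each $y_i$ confined to $B(0,2t)$, with the factorial denominators supplying summability) is precisely the content of Lemma \ref{l:indicator.asym}$(ii)$--$(iv)$ that the paper feeds into dominated convergence. The only cosmetic differences are that you rescale by $r_n(t)$ rather than $s_n$ and treat the restriction to $A$ directly via $m(\partial A)=0$, where the paper reduces to $A=\R^d$ without loss of generality.
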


We can now introduce the FSLLN for the process $\chi_n$. 

\begin{thm}[FSLLN for $\chi_n$] \label{t:fslln}
As $n\to\infty$, 
$$
\frac{\chi_n(t)}{n} \to  \sum_{k=0}^\infty (-1)^k \psi_{k+1, k, k}(t,t) \ \ \text{a.s.~in } D[0,\infty), 
$$
where $D[0,\infty)$ is equipped with the uniform topology. 
\end{thm}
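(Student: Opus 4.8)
The plan is to prove a pointwise strong law of large numbers for each fixed $t$ via the Borel--Cantelli lemma, and then upgrade this to an almost sure statement in $D[0,\infty)$ under the uniform topology by exploiting monotonicity of the simplex counts $S_k(\Pn, r_n(t))$ in $t$ together with the continuity of the limit function.

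First I would fix $t \ge 0$ and show $n^{-1}\big(\chi_n(t) - \E[\chi_n(t)]\big) \to 0$ almost surely. By Proposition~\ref{p:cov.asym}, $n^{-1}\E[\chi_n(t)]$ converges to $\sum_{k=0}^\infty (-1)^k \psi_{k+1,k,k}(t,t)$, so it suffices to control the fluctuations. The standard route (as signalled in Section~\ref{results}) is to bound the centered fourth moment: I would establish $\E\big[\big(\chi_n(t) - \E[\chi_n(t)]\big)^4\big] = O(n^2)$, so that $\sum_n \P\big( n^{-1}|\chi_n(t) - \E[\chi_n(t)]| > \varepsilon \big) \le \sum_n \varepsilon^{-4} n^{-4} \cdot O(n^2) < \infty$, and Borel--Cantelli gives the pointwise a.s.\ convergence. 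The fourth-moment bound itself is a Palm-calculus / Mecke-formula computation: expand $\chi_n(t) = \sum_k (-1)^k S_k(\Pn, r_n(t))$, note that $S_k \equiv 0$ for $k$ large relative to the (a.s.\ finite) local point counts, and use the critical scaling $n s_n^d \to 1$ to see each relevant joint intensity integral contributes the right power of $n$; the alternating structure and the boundedness of $f$ keep the sum over $k$ under control. This is essentially the argument of \cite{Goel2018} adapted to the Euler characteristic.

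Next I would pass from fixed $t$ to all of $[0,\infty)$. Let $\mu(t) := \sum_{k=0}^\infty (-1)^k \psi_{k+1,k,k}(t,t)$ denote the limiting function; one checks it is continuous (indeed each $\psi_{k+1,k,k}(t,t)$ is continuous in $t$ by dominated convergence in the defining integrals, using $h^k_t$ monotone and bounded). Apply the pointwise result on a countable dense set $\mathbb{Q}_{\ge 0} \cap [0,T]$ to get a single almost sure event on which $n^{-1}\chi_n(q) \to \mu(q)$ for every rational $q \le T$. On this event, fix any $t \in [0,T]$ and rationals $q_1 \le t \le q_2$. Because each $S_k(\Pn, r_n(t))$ is nondecreasing in $t$ by \eqref{e:monotonicity.indicator}, I would split $\chi_n$ into its even- and odd-indexed parts, $\chi_n = \chi_n^{\mathrm{even}} - \chi_n^{\mathrm{odd}}$, each of which is monotone in $t$; sandwiching each monotone part between its values at $q_1$ and $q_2$, dividing by $n$, letting $n\to\infty$ and then $q_1 \uparrow t$, $q_2 \downarrow t$ and invoking continuity of the (two) limiting pieces yields $n^{-1}\chi_n(t) \to \mu(t)$ with a modulus of convergence that is uniform on $[0,T]$ (standard Pólya-type argument: pointwise convergence of monotone functions to a continuous limit on a compact interval is uniform). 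Since $T$ is arbitrary, this gives a.s.\ convergence in the uniform-on-compacts topology on $D[0,\infty)$.

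The main obstacle is the fourth-moment estimate: making the expansion of $\E\big[(\chi_n(t)-\E[\chi_n(t)])^4\big]$ rigorous requires carefully organizing the Mecke/Palm expansion over up to four (possibly overlapping) simplices, tracking which point subsets are shared, and verifying that after centering only the $O(n^2)$ terms survive while the potentially $O(n^3)$ and $O(n^4)$ contributions cancel --- all uniformly in the summation index $k$ so that the infinite alternating sum over simplex dimensions converges. The monotonicity-plus-continuity upgrade to $D[0,\infty)$ is comparatively routine, the only subtlety being that $\chi_n$ itself is not monotone, which is why the even/odd decomposition (each piece monotone) is needed.
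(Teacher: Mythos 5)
Your proposal follows essentially the same route as the paper: a pointwise strong law via a fourth-moment bound and the Borel--Cantelli lemma, upgraded to uniform convergence on compacts by splitting $\chi_n$ into its monotone even and odd parts and using continuity of the limit (the paper packages this upgrade as Proposition~\ref{p:monoFSLLN}). The only organizational difference is that the paper performs the even/odd split \emph{first} and establishes the pointwise law and fourth-moment bound for each monotone part separately---which is what your sandwich argument actually requires, since a.s.\ convergence of the difference at rationals does not by itself give a.s.\ convergence of each monotone piece---but the identical moment computation applies to each part, so this is a trivial reordering.
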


Before stating {our} functional central limit theorem (FCLT) for $\chi_n$, let us define its limiting process. First define $(\mathcal H_k, \, k \in \Nat_0)$ as a family of zero-mean Gaussian processes on a generic probability space $(\Omega, \mathcal{F}, \P)$, with intra-process covariance
\begin{equation}
\E[\mathcal{H}_k(t)\mathcal{H}_k(s)] = \Psi_{k, k}(t, s), \label{e:intraprocess}
\end{equation}
and inter-process convariance 
\begin{equation}
\E[\mathcal{H}_{k_1}(t)\mathcal{H}_{k_2}(s)] = \Psi_{k_1, k_2}(t, s), \label{e:interprocess}
\end{equation} 
for all $k, k_1, k_2 \in \Nat_0$ with $k_1 \neq k_2$ and $t, s \ge 0$. In the proof of Proposition \ref{p:cov.asym}, the functions $\Psi_{k_1,k_2}(t,s)$ naturally appear in the covariance calculation of $\chi_n$,  
which in turn implies that the covariance functions in \eqref{e:intraprocess} and \eqref{e:interprocess} are well-defined. With these notations in mind, we now define the limiting Gaussian process for $\xn$ as 
\begin{equation}  \label{e:limit.GP}
\mathcal H(t) := \sum_{k=0}^\infty (-1)^k \mathcal H_k(t), \ \ t \ge 0, 
\end{equation}
such that
\begin{equation}  \label{e:limit.GP.cov}
\E[\mathcal H(t)\mathcal H(s)] = \sum_{k_1=0}^\infty \sum_{k_2=0}^\infty (-1)^{k_1+k_2} \Psi_{k_1,k_2}(t,s), \ \ t, s \ge0. 
\end{equation}
Once again, Proposition \ref{p:cov.asym} implies that the right hand side of \eqref{e:limit.GP.cov} can define the covariance functions of a limiting Gaussian process, since it is obtained as a (scaled) limit of the covariance functions of $\chi_n$. In particular, since \eqref{e:limit.GP.cov} is convergent, for every $t\ge 0$, $\mathcal H(t)$ is definable in the $L^2(\Omega)$-sense. Note that the Euler characteristic in \eqref{e:Euler.characteristic.proc} and the process \eqref{e:limit.GP} exhibit similar structure, in the sense that $S_k\big( \Pn, r_n(t) \big)$ in \eqref{e:Euler.characteristic.proc} and $\mathcal H_k(t)$ both correspond to the spatial distribution of $k$-simplices.

Now, we proceed to stating the FCLT for $\chi_n$. 

\begin{thm}[FCLT for $\chi_n$] \label{t:main}
As $n \to \infty$, 
$$
\xn \Rightarrow \mathcal H \ \ \text{in } D[0,\infty),
$$
where $D[0,\infty)$ is equipped with the Skorohod $J_1$-topology. 
Furthermore, for every $0 < T < \infty$, we have that $\big(\mathcal{H}(t), \, 0 \leq t \leq T\big)$ has a continuous version with H{\"o}lder continuous sample paths of any exponent $\gamma \in [0, 1/2)$.
\end{thm}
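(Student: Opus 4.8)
The plan is to decompose the proof into three parts: finite-dimensional convergence of $\xn$, tightness of $\xn$ in $D[0,\infty)$, and the continuity/Hölder regularity of the limit $\mathcal H$. For the finite-dimensional convergence, I would fix $0 \le t_1 < \dots < t_m < \infty$ and apply the Cramér-Wold device: for arbitrary $a_1,\dots,a_m\in\R$, show that $\sum_{i=1}^m a_i \xn(t_i)$ converges in distribution to $\sum_{i=1}^m a_i \mathcal H(t_i)$, which is Gaussian with the variance dictated by \eqref{e:limit.GP.cov}. Here I would first pass to the restricted version $\chi_{n,A}$ with $A$ a large bounded box, so that the collection of simplices contributing has a bounded-size dependency graph; Stein's method for normal approximation (as in Theorem 2.4 of \cite{penr}, following the proof of Theorem 3.9 there) then gives a quantitative CLT for $n^{-1/2}(\chi_{n,A}(t) - \E[\chi_{n,A}(t)])$, with the covariance structure supplied by Proposition \ref{p:cov.asym}. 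One then lets $A \uparrow \R^d$ and controls the error $\chi_n - \chi_{n,A}$ in $L^2$ using the covariance asymptotics, which are uniform enough (by dominated convergence in the integrals defining $\Psi_{k_1,k_2,A}$) to make the truncation error vanish as $A\uparrow\R^d$, uniformly in $n$.

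For tightness I would use Theorem 13.5 in \cite{Billingsley}, which requires a moment bound of the form $\E\big[|\xn(t)-\xn(t_1)|^{\beta_1}|\xn(t_2)-\xn(t)|^{\beta_2}\big] \le \big(F(t_2)-F(t_1)\big)^{\alpha}$ for $t_1\le t\le t_2$, with $\alpha>1/2$ and $\beta_1,\beta_2 \ge 0$, together with control of the jumps at the endpoints. The natural choice is $\beta_1=\beta_2=2$, giving a fourth-moment increment estimate; I would expand $\E\big[(\chi_n(t)-\chi_n(s))^4\big]$ combinatorially, using the monotonicity \eqref{e:monotonicity.indicator} of $h^k_t$ in $t$ so that increments $h^k_t - h^k_s$ are themselves indicators of ``borderline'' configurations, and exploit the scale invariance of $h^k$ together with $ns_n^d\to 1$ to extract a factor proportional to $(G(t)-G(s))^2$ for a suitable continuous nondecreasing $G$ — the monotone ``volume'' of configurations that become simplices between radii $r_n(s)$ and $r_n(t)$. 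The bookkeeping is delicate because the sum defining $\chi_n$ ranges over all simplex dimensions, so one must argue that the series of contributions converges (again via the summability already established in Proposition \ref{p:cov.asym}) and that the geometric integrals are finite; this fourth-moment increment bound is the step I expect to be the main obstacle.

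Finally, the continuity and Hölder regularity of $\mathcal H$ on $[0,T]$ would follow from the Kolmogorov–Chentsov continuity criterion: since $\mathcal H$ is Gaussian with mean zero, $\E\big[(\mathcal H(t)-\mathcal H(s))^2\big] = \E[\mathcal H(t)^2] - 2\E[\mathcal H(t)\mathcal H(s)] + \E[\mathcal H(s)^2]$, and using \eqref{e:limit.GP.cov} together with the same geometric-integral estimates that produce the increment bound for $\xn$, one gets $\E\big[(\mathcal H(t)-\mathcal H(s))^2\big] \le C|t-s|$ on $[0,T]$. Gaussianity then upgrades this to $\E\big[|\mathcal H(t)-\mathcal H(s)|^{2p}\big] \le C_p |t-s|^p$ for every $p \ge 1$, and Kolmogorov–Chentsov yields a continuous modification with sample paths that are $\gamma$-Hölder for every $\gamma < (p-1)/(2p)$; letting $p\to\infty$ gives every $\gamma \in [0,1/2)$. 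One subtlety worth noting is the compatibility of the $J_1$-topology with a continuous limit: since the limit process has continuous sample paths, weak convergence in $(D[0,\infty), J_1)$ is in fact equivalent to weak convergence in the uniform-on-compacts topology, so no extra care beyond the tightness estimate above is needed.
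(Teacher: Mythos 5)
Your overall architecture coincides with the paper's: Cram\'er--Wold plus Stein's method (Theorem 2.4 of \cite{penr}) on a bounded window $A$ with a dependency graph indexed by cubes of side $r_n(t_m)$, followed by letting $A\uparrow\R^d$ with an $L^2$/Chebyshev control of $\chi_n-\chi_{n,A}$ via Proposition \ref{p:cov.asym}; tightness via Theorem 13.5 of \cite{Billingsley}; and H\"older continuity of $\mathcal H$ from Gaussianity, the bound $\E[(\mathcal H(t)-\mathcal H(s))^2]\le C|t-s|$ and Kolmogorov's criterion. Those two outer parts are essentially the paper's proof.

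There is, however, a genuine problem in the tightness step as you describe it. You correctly quote the hypothesis of Theorem 13.5, which involves the product $\E\big[|\xn(t_2)-\xn(s)|^{2}\,|\xn(s)-\xn(t_1)|^{2}\big]$ over the two \emph{adjacent, disjoint} windows $(t_1,s]$ and $(s,t_2]$, but you then propose to expand the single-increment moment $\E\big[(\chi_n(t)-\chi_n(s))^4\big]$. These are not interchangeable here. In the single-increment expansion the fully ``connected'' configurations (e.g.\ $\Y_1=\Y_2=\Y_3=\Y_4$, or more generally all four tuples glued into one cluster) contribute, after Palm calculus and the change of variables, a term of order $n\cdot\big(G(t)-G(s)\big)$ with only \emph{one} power of the increment; after dividing by $n^2$ this is $\big(G(t)-G(s)\big)/n$, which is not dominated by $C\big(G(t)-G(s)\big)^2$ uniformly in $n$ (it fails precisely when $t-s$ is small relative to $n^{-1}$), so the required bound with exponent $\alpha>1/2$ cannot be obtained this way. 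The paper's argument avoids this exactly by working with the product form: since a given simplex cannot have its birth radius in both $(2r_n(t_1),2r_n(s)]$ and $(2r_n(s),2r_n(t_2)]$, the diagonal terms vanish identically ($h^{k}_{s,t_1}h^{k}_{t_2,s}=0$ on fully coincident arguments), and the surviving overlap configurations are handled by Lemma \ref{l:hkl_cov} $(ii)$, which produces the two powers $(t_2^d-t_1^d)^2$. You should therefore carry out the combinatorial expansion directly for the adjacent-interval product (as in \eqref{e:xisum} and cases \textbf{(I)}--\textbf{(IV)} of the paper), not for the fourth moment of a single increment; the rest of your outline (monotonicity of $h^k_t$, scale invariance, summability over simplex dimensions as in Proposition \ref{p:cov.asym}) then goes through.
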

\begin{rem}
The results of Theorem \ref{t:fslln} and Theorem \ref{t:main} also hold for the {\v C}ech complex, in the case of the latter theorem only up to finite-dimensional weak convergence of $\xn$. 
The definition of a $k$-simplex of the \v{C}ech complex requires a non-empty intersection of ``multiple'' closed balls. This makes it more difficult to establish the required tightness for the \v{C}ech complex. Specifically, obtaining bounds as in  Lemma \ref{l:hkl_cov} seems much harder. If one were able to establish such a nice bound, the rest of the argument for tightness would essentially be the same as the Vietoris-Rips case.
\remove{The primary reason for this is that it is much more difficult to establish bounds as in Lemma \ref{l:hkl_cov} in the {\v C}ech complex case. The reason for this may lie in the fact that it is not obvious how to find an upper bound of the form
\begin{align}
&\int_{(\R^d)^{k_1 + k_2 + 1 - j}} h_{s,t_1}^{k_1}(0, \mathbf{y}_0, \mathbf{y}_1) h_{t_2,s}^{k_2}(0, \mathbf{y}_0, \mathbf{y}_2) \dif{\mathbf{y}_0} \dif{\mathbf{y}_1} \dif{\mathbf{y}_2} \notag \\
&\phantom{\int_{(\R^d)^{k_1 + k_2 + 1 - j}} h_{s,t_1}^{k_1}(0, \mathbf{y}_0, \mathbf{y}_1) h_{t_2,s}^{k_2}}\leq C_{k_1, k_2, T, d} (F(t_2) - F(t_1))^2, \label{e:int_bound}
\end{align}
where $F$ is non-decreasing and continuous on $[0, T]$, as in Lemma \ref{l:hkl_cov}. It may be difficult to prove tightness in the {\v C}ech complex case using the same methods, i.e. decomposition of $h^k_{t,s}(0, \mathbf{y})$ into indicator functions, as in the Vietoris-Rips setup. See the proof section for more details. 
}

\end{rem}
\begin{ex}
Consider a map $x \mapsto \sup_{0\le t \le 1}|x(t)|$ from $D[0,1]$ to $\R_+$. This map is continuous on $C[0,1]$, the space of continuous functions on $[0,1]$. Since the limits in Theorems \ref{t:fslln}  and \ref{t:main} are both continuous, we get that as $n\to\infty$, 
\begin{align*}
&n^{-1} \sup_{0 \le t \le 1} |\chi_n(t)| \to \sup_{0 \le t \le 1}|\sum_{k=0}^\infty (-1)^k \psi_{k+1, k, k}(t,t)| \ \ a.s., \\
&n^{-1/2} \sup_{0 \le t \le 1} \big| \chi_n (t) -\E[\chi_n(t)] \big| \Rightarrow \sup_{0 \le t \le 1} |\mathcal H(t)|. 
\end{align*}
In particular, the latter claims that the supremum of a mean-centered Euler characteristic process can be approximated by $n^{1/2}\sup_{0 \le t \le 1}|\mathcal H(t)|$ for large enough $n$. 
\end{ex}
%
%
%
%
%
%
\section{Proofs} \label{proofs}

We first deal with moment asymptotics of $\chi_n$ in Section \ref{sec:moment.asym}. 
Section \ref{sec:proof.fslln} proves the FSLLN in Theorem \ref{t:fslln}. 
Subsequently, we establish Theorem \ref{t:main}, the proof of which is divided into two parts, with the first part devoted for finite-dimensional weak convergence, and the second for tightness. The proofs frequently refer to \emph{Palm theory for Poisson processes} for computing the moments of various Poisson functionals. A brief citation is given in Lemma \ref{l:palm} of the Appendix.
Finally we verify H\"older continuity of the limiting Gaussian process $\mathcal H$, following closely to what is established for subgraph counting processes in Proposition 4.2 of \cite{owadaFCLT}. 

For simplicity of description, we assume throughout this section that $ns_n^d=1$. However, generalizing it to $ns_n^d\to1$, $n\to\infty$ is straightforward. In the following, we write $a\vee b:=\max\{ a,b\}$ and $a \wedge b := \min\{ a,b \}$ for $a, b \in \R$. 

\subsection{Proof of moment asymptotics} \label{sec:moment.asym}


Without loss of generality, the proof of Proposition \ref{p:cov.asym} only handles the case when $A=\R^d$. Throughout this section, let $\Y$, $\Y_1$, and $\Y_2$ denote collections of iid random points with density $f$. We begin with the following lemma.  
\begin{lem}  \label{l:indicator.asym}
$(i)$ For $t\ge 0$ we have, as $n\to\infty$, 
$$
\frac{n^k}{(k+1)!}\, \E\ \big[ h_{r_n(t)}^k (\Y) \big] \to \psi_{k+1,k,k}(t,t). 
$$
$(ii)$ For all $n \in \Nat$, 
$$
n^k \E\ \big[ h_{r_n(t)}^k (\Y) \big] \le (a_t)^k,
$$
where 
\begin{equation} \label{e:def.at}
a_t := (2t)^d \theta_d \|f \|_\infty
\end{equation} 
with $\theta_d=m\big( B(0,1) \big)$, i.e., volume of the unit ball in $\R^d$. \\
$(iii)$ For $1\le j \le (k_1\wedge k_2)+1$, $k_1, k_2 \in \Nat_0$, and $t,s\ge 0$, 
$$
\frac{n^{k_1+k_2+1-j}}{j! (k_1+1-j)! (k_2+1-j)!}\, \E \big[ h_{r_n(t)}^{k_1}(\Y_1)h_{r_n(s)}^{k_2}(\Y_2)\, \one \big\{ |\Y_1 \cap \Y_2|=j \big\} \big] \to \psi_{j,k_1,k_2}(t,s)
$$
as $n\to\infty$. \\
$(iv)$ For all $n\in \Nat$, 
$$
n^{k_1+k_2+1-j} \E \big[ h_{r_n(t)}^{k_1}(\Y_1)h_{r_n(s)}^{k_2}(\Y_2)\, \one \big\{ |\Y_1 \cap \Y_2|=j \big\} \big] \le (a_{t\vee s})^{k_1+k_2+1-j}. 
$$
\end{lem}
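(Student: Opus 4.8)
The plan is to prove the four assertions by reducing every expectation to a Lebesgue integral via the multivariate Mecke/Palm formula and then performing the change of variables $y_i \mapsto (y_i - y_0)/s_n$ together with the translation–scale invariance of $h^k_t$. For part $(i)$, since $\Y=\{Y_0,\dots,Y_k\}$ consists of $k+1$ i.i.d.~points with density $f$, we write
\begin{align*}
\E\big[h^k_{r_n(t)}(\Y)\big] = \int_{(\R^d)^{k+1}} h^k_{s_n t}(x_0,\dots,x_k)\prod_{i=0}^k f(x_i)\,\dif x_0\cdots\dif x_k.
\end{align*}
Fixing $x_0$, substituting $x_i = x_0 + s_n y_i$ for $i=1,\dots,k$, and using $h^k_{s_n t}(x_0, x_0+s_ny_1,\dots) = h^k_t(0,y_1,\dots,y_k)$ gives a factor $s_n^{dk}$; then $n^k s_n^{dk} = (ns_n^d)^k = 1$ in the critical regime, so the prefactor $n^k/(k+1)!$ is exactly accounted for. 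The integrand becomes $f(x_0)\prod_{i=1}^k f(x_0 + s_n y_i)$, and as $n\to\infty$ (so $s_n\to 0$) this converges pointwise to $f(x_0)^{k+1}$; dominated convergence — with the domination coming from the compact support in the $y$-variables forced by $h^k_t$ and the boundedness of $f$ — yields the limit $\psi_{k+1,k,k}(t,t)$ after matching it to the definition with $j=k+1$. The factorial $(k+1)!$ arises because $\Y$ is a set of unordered points while the integral is over ordered tuples, exactly as in the definition of $\psi_{j,k_1,k_2}$.

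For part $(ii)$, I would bound $h^k_{s_nt}(x_0,\dots,x_k) \le \prod_{i=1}^k \one\{B(x_0,s_nt)\cap B(x_i,s_nt)\neq\emptyset\} = \prod_{i=1}^k \one\{|x_i - x_0| \le 2s_nt\}$, keeping only the $k$ pairwise conditions that involve $x_0$. Integrating out $x_1,\dots,x_k$ against $f$ after this reduction gives $\prod_{i=1}^k \int_{B(x_0,2s_nt)} f(x_i)\,\dif x_i \le \big(\|f\|_\infty \theta_d (2s_nt)^d\big)^k$; then integrating $f(x_0)\dif x_0$ over $\R^d$ gives $1$, and multiplying by $n^k$ and using $ns_n^d = 1$ produces exactly $(a_t)^k$ with $a_t = (2t)^d\theta_d\|f\|_\infty$. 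No limit is needed here — the bound is for every $n$.

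Parts $(iii)$ and $(iv)$ proceed the same way but with the two-sample Mecke formula. Here $\Y_1$ and $\Y_2$ each have $k_1+1$, resp.~$k_2+1$, points, they share exactly $j$ common points, so the union $\Y_1\cup\Y_2$ has $k_1+k_2+2-j$ points; labelling the shared points $0$ (the pivot for the change of variables) and $y_1,\dots,y_{j-1}$, the points exclusive to $\Y_1$ as $y_1,\dots,y_{k_1}$ in the first slot pattern, and those exclusive to $\Y_2$ as $y_{k_1+1},\dots,y_{k_1+k_2+1-j}$, reproduces exactly the index pattern in the definition of $\psi_{j,k_1,k_2}$. The change of variables $x_i = x_0 + s_n y_i$ now produces $s_n^{d(k_1+k_2+1-j)}$, and $n^{k_1+k_2+1-j}s_n^{d(k_1+k_2+1-j)} = 1$ absorbs the prefactor. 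The combinatorial factor $j!(k_1+1-j)!(k_2+1-j)!$ counts the ways to match ordered tuples in the integral to the unordered structure (choice of which $j$ of the shared points, and orderings within each exclusive block). For $(iii)$, dominated convergence (domination again from the compact $y$-support imposed by the indicator and $\|f\|_\infty$) sends $f(x_0)\prod f(x_0+s_ny_i) \to f(x_0)^{k_1+k_2+2-j}$, giving $\psi_{j,k_1,k_2}(t,s)$. For $(iv)$, I bound $h^{k_1}_{r_n(t)}(\Y_1)h^{k_2}_{r_n(s)}(\Y_2)$ by keeping only the conditions tying each non-pivot point to the pivot, each of which forces the point into a ball of radius $2s_n(t\vee s)$ around $x_0$ (using monotonicity \eqref{e:monotonicity.indicator} to replace $r_n(t)$ and $r_n(s)$ by $r_n(t\vee s)$), and the same integration as in $(ii)$ gives $(a_{t\vee s})^{k_1+k_2+1-j}$.

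The only mild subtlety — and the step I would be most careful about — is the bookkeeping of combinatorial factors and index patterns in $(iii)$: making sure that after applying the Mecke formula to the product $h^{k_1}_{r_n(t)}(\Y_1)h^{k_2}_{r_n(s)}(\Y_2)\one\{|\Y_1\cap\Y_2|=j\}$ and choosing the pivot among the $j$ shared points, the resulting ordered integral matches the definition of $\psi_{j,k_1,k_2}$ with precisely the stated factorials and no stray constants. Everything else is a routine change of variables plus dominated convergence, with $ns_n^d=1$ doing the scaling work and the compact support built into $h^k_t$ supplying the integrable dominating function.
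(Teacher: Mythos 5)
Your proposal is correct and follows essentially the same route as the paper: the change of variables $x_i = x_0 + s_n y_i$, the scaling identity $n s_n^d = 1$, trivial bounds forcing each $y_i$ into $B(0,2t)$ for parts $(ii)$ and $(iv)$, and convergence under the integral sign for parts $(i)$ and $(iii)$ (the paper likewise proves only $(iii)$--$(iv)$ and notes $(i)$--$(ii)$ are simpler). The one point where you should be slightly more careful is the claim that $\prod_i f(x_0+s_n y_i)\to f(x_0)^{k}$ ``pointwise'': since $f$ is only assumed measurable and bounded, this step should be justified via the Lebesgue differentiation theorem (convergence at Lebesgue points of $f$, hence a.e.), exactly as the paper does, rather than asserted as bare pointwise convergence.
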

\begin{proof}
We shall prove $(iii)$ and $(iv)$ only, since $(i)$ and $(ii)$ can be established by a similar and simpler argument. 
Making change of variables $x_1=x$ and $x_i = x+s_n y_{i-1}$, $i=1,\dots,k_1+k_2+2-j$, the left hand side of $(iii)$ equals
\begin{align}
&\frac{n^{k_1+k_2+1-j}}{j! (k_1+1-j)! (k_2+1-j)!}\, \int_{(\R^d)^{k_1+k_2+2-j}} h_{r_n(t)}^{k_1}(x_1,\dots,x_{k_1+1}) \notag \\
&\qquad \qquad \qquad\qquad \qquad\qquad   \times h_{r_n(s)}^{k_2}(x_1,\dots,x_j,x_{k_1+2}, \dots, x_{k_1+k_2+2-j}) \prod_{i=1}^{k_1+k_2+2-j} f(x_i)\dif \bx \notag \\
&=\frac{(ns_n^d)^{k_1+k_2+1-j}}{j! (k_1+1-j)! (k_2+1-j)!}\, \int_{\R^d} \int_{(\R^d)^{k_1+k_2+1-j}} h_t^{k_1}(0,y_1\dots,y_{k_1}) \label{e:long.lemma}\\
&\quad \times h_s^{k_2}(0,y_1,\dots,y_{j-1},y_{k_1+1}, \dots, y_{k_1+k_2+1-j}) f(x)\prod_{i=1}^{k_1+k_2+1-j} f(x+s_ny_i) \dif \by \dif x. \notag 
\end{align}
Recall that $ns_n^d=1$ and note that $\prod_{i=1}^{k_1+k_2+1-j}f(x+s_ny_i) \to f(x)^{\kokt +1-j}$, $n\to\infty$, holds under the integral sign because of the Lebesgue differentiation theorem. Thus, \eqref{e:long.lemma} converges to $\psi_{j,k_1,k_2}(t,s)$ as $n\to\infty$.  

Now let us turn to proving statement $(iv)$. Without loss of generality, we may assume $s\le t$. Performing the same change of variables as in $(iii)$, the left hand side of $(iv)$ is bounded by 
\begin{equation}  \label{e:expression.upper.bound}
\big(  \|f\|_\infty \big)^{\kokt+1-j}\int_{(\R^d)^{\kokt+1-j}} h_t^{k_1}(0,y_1\dots,y_{k_1}) 
 h_s^{k_2}(0,y_1,\dots,y_{j-1},y_{k_1+1}, \dots, y_{k_1+k_2+1-j}) \dif \by. 
\end{equation}
By the definition of the indicators $h_t^{k_1}$, $h_s^{k_2}$, each of the $y_i$'s in \eqref{e:expression.upper.bound} must be distance at most $2t$ from the origin. Therefore, \eqref{e:expression.upper.bound} can be bounded by 
$$
\big(  \|f\|_\infty \big)^{\kokt +1-j} m\big( B(0,2t) \big)^{\kokt+1-j} = (a_t)^{\kokt+1-j}. 
$$
\end{proof}
\begin{proof}[Proof of Proposition \ref{p:cov.asym}]
We only prove \eqref{e:cov.asymp} as the proof techniques for \eqref{e:expectation.asym} are very similar to \eqref{e:cov.asymp}. Specifically, we shall make use of $(ii)$, $(iii)$, and $(iv)$ of Lemma \ref{l:indicator.asym}. 
We start by writing
\begin{align}
n^{-1} \Cov \big( \chi_n(t), \chi_n(s) \big) &= n^{-1} \E \bigg[ \sum_{k_1=0}^\infty\sum_{k_2=0}^\infty (-1)^{\kokt} S_{k_1}\big(\Pn, r_n(t)\big) S_{k_2}\big(\Pn, r_n(s)\big) \bigg] \label{e:comp.covariance}\\
&\qquad- n^{-1} \E\bigg[ \sum_{k=0}^\infty(-1)^k S_k\big(\Pn, r_n(t)\big) \bigg]\E\bigg[ \sum_{k=0}^\infty(-1)^k S_k\big(\Pn, r_n(s)\big) \bigg].\notag
\end{align}
Next, Palm theory for Poisson processes, Lemma \ref{l:palm} $(ii)$, along with the bounds given in Lemma \ref{l:indicator.asym} $(ii)$ and $(iv)$, yields that 
\begin{align*}
&\E \Big[  S_{k_1}\big(\Pn, r_n(t)\big) S_{k_2}\big(\Pn, r_n(s)\big) \Big] \\
&=\sum_{j=0}^{(k_1 \wedge k_2)+1} \E\bigg[ \sum_{\Y_1\subset \Pn}\sum_{\Y_2\subset \Pn} h_{r_n(t)}^{k_1}(\Y_1)h_{r_n(s)}^{k_2}(\Y_2)\, \one\big\{ |\Y_1\cap \Y_2|=j \big\} \bigg] \\
&= \frac{n^{\kokt +2}}{(k_1+1)!(k_2+1)!}\, \E \big[ h_{r_n(t)}^{k_1}(\Y_1) \big]\E \big[ h_{r_n(s)}^{k_2}(\Y_2) \big] \\
&\qquad  + \sum_{j=1}^{(k_1 \wedge k_2)+1}\frac{n^{\kokt+2-j}}{j!(k_1+1-j)!(k_2+1-j)!}\, \E\Big[ h_{r_n(t)}^{k_1}(\Y_1) h_{r_n(s)}^{k_2}(\Y_2) \ind{\card{\Y_1 \cap \Y_2} = j}  \Big] \\
&\le \frac{n^2(a_t)^{k_1}(a_s)^{k_2}}{(k_1+1)!(k_2+1)!} +  \sum_{j=1}^{(k_1 \wedge k_2)+1}\frac{n(a_{t\vee s})^{\kokt+1-j}}{j!(k_1+1-j)!(k_2+1-j)!}. 
\end{align*}
Here it is straightforward to see that 
\begin{align*}
&\sum_{k=0}^\infty \frac{(a_t)^k}{(k+1)!} < e^{a_t} <\infty, \ \ \sum_{k_1=0}^\infty \sum_{k_2=0}^\infty  \sum_{j=1}^{(k_1 \wedge k_2)+1}\frac{(a_{t\vee s})^{\kokt+1-j}}{j!(k_1+1-j)!(k_2+1-j)!} < 2 e^{3a_{t\vee s}} < \infty.  
\end{align*}
So Fubini's theorem is applicable to the first term in \eqref{e:comp.covariance}. Repeating the same argument for the second term of \eqref{e:comp.covariance}, one can get 
\begin{align*}
n^{-1} \Cov \big( \chi_n(t), \chi_n(s) \big) &= \sum_{k_1=0}^\infty \sum_{k_2=0}^\infty(-1)^{\kokt} \sum_{j=1}^{(k_1 \wedge k_2)+1}\frac{n^{\kokt+1-j}}{j!(k_1+1-j)!(k_2+1-j)!}\, \\
&\qquad \qquad \qquad \qquad \times \E\Big[ h_{r_n(t)}^{k_1}(\Y_1) h_{r_n(s)}^{k_2}(\Y_2) \ind{\card{\Y_1 \cap \Y_2} = j}  \Big].
\end{align*}
By virtue of Lemma \ref{l:indicator.asym} $(iii)$ and $(iv)$, the dominated convergence theorem can conclude that the last expression converges to $\sum_{k_1=0}^\infty \sum_{k_2=0}^\infty(-1)^{\kokt} \Psi_{k_1,k_2}(t,s)$ as required. 
\end{proof}
\subsection{Proof of FSLLN}  \label{sec:proof.fslln}

To prove the functional strong law of large numbers, we first establish a result which allows us to extend a ``pointwise'' strong law  for a fixed $t$ into a functional one, if the processes are non-decreasing and there is a deterministic and continuous limit. 
We again would like to emphasize that our approach in this section gives an improvement from the viewpoints of assumptions on the density $f$. Unlike the existing results such as \cite{Yogeshwaran2017}, we do not require $f$ to have compact and convex support.
\begin{prop}\label{p:monoFSLLN}
Let $(X_n, \, n \in \mathbb{N})$ be a sequence of random elements in $D[0, \infty)$ with non-decreasing sample paths. Suppose $\lambda: [0,\infty) \to \R$ is a continuous and non-decreasing function. If we have 
\begin{equation} \label{e:slln}
X_n(t) \to \lambda(t),  \ \ n\to\infty, \ \ \text{a.s.}
\end{equation}
for every $t\ge0$, then it follows that 
\begin{equation*} 
\sup_{t \in [0,T]} |X_n(t) - \lambda(t)| \to 0,  \ \ n\to\infty, \ \ \text{a.s.}
\end{equation*}
for every $0\le T<\infty$. Hence, it holds that $X_n \to \lambda$ a.s.~in $D[0, \infty)$ endowed with the uniform topology.  
\end{prop}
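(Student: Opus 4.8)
The plan is to exploit the classical fact that pointwise convergence of monotone functions to a \emph{continuous} monotone limit automatically upgrades to uniform convergence on compacts — a Dini-type / Pólya-type argument — and then to apply this $\omega$-by-$\omega$ on the almost-sure event where \eqref{e:slln} holds simultaneously for all rationals.

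First I would fix $0 \le T < \infty$ and a countable dense set $Q \subset [0,\infty)$, say $Q = \mathbb{Q} \cap [0,\infty)$, and also adjoin the endpoint $T$ to this set. For each fixed $t$ the hypothesis \eqref{e:slln} gives a probability-one event on which $X_n(t) \to \lambda(t)$; intersecting over the countably many $t \in Q \cup \{T\}$ yields a single event $\Omega_0$ with $\P(\Omega_0) = 1$ on which $X_n(q) \to \lambda(q)$ for every $q \in Q \cup \{T\}$. All the remaining work is deterministic and takes place on a fixed $\omega \in \Omega_0$.

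Next, on such an $\omega$, I would carry out the monotone-sandwich step. Fix $\varepsilon > 0$. Since $\lambda$ is continuous on the compact interval $[0,T]$, it is uniformly continuous there, so there is a partition $0 = u_0 < u_1 < \dots < u_m = T$ with $u_i \in Q \cup \{T\}$ (possible since $Q$ is dense and we included $T$) such that $\lambda(u_i) - \lambda(u_{i-1}) < \varepsilon$ for each $i$. For any $t \in [u_{i-1}, u_i]$, monotonicity of $X_n$ gives $X_n(u_{i-1}) \le X_n(t) \le X_n(u_i)$, hence
\[
X_n(u_{i-1}) - \lambda(u_i) \;\le\; X_n(t) - \lambda(t) \;\le\; X_n(u_i) - \lambda(u_{i-1}),
\]
and, adding and subtracting $\lambda(u_{i-1})$ resp.\ $\lambda(u_i)$ and using $\lambda(u_i) - \lambda(u_{i-1}) < \varepsilon$,
\[
|X_n(t) - \lambda(t)| \;\le\; \max_{0 \le i \le m} |X_n(u_i) - \lambda(u_i)| + \varepsilon .
\]
Since the right-hand side does not depend on $t$, taking $\sup_{t \in [0,T]}$ and then $\limsup_{n\to\infty}$, and using that $X_n(u_i) \to \lambda(u_i)$ for each of the finitely many partition points $u_i \in Q \cup \{T\}$, we get $\limsup_{n\to\infty} \sup_{t\in[0,T]} |X_n(t) - \lambda(t)| \le \varepsilon$. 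As $\varepsilon > 0$ was arbitrary, $\sup_{t\in[0,T]} |X_n(t) - \lambda(t)| \to 0$ on $\Omega_0$, i.e.\ almost surely. Finally, since uniform convergence on every compact $[0,T]$ is exactly convergence in the topology of uniform convergence on compacts of $D[0,\infty)$, and the limit $\lambda$ is continuous (hence a bona fide element of $D[0,\infty)$), this gives $X_n \to \lambda$ a.s.\ in $D[0,\infty)$ with the stated topology.

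The only subtle point — and the one I would be most careful about — is making sure the partition points can be chosen inside the countable set $Q \cup \{T\}$ where a.s.\ convergence is available; this is why $T$ must be explicitly adjoined and why density of $Q$ is used. Everything else (the monotone sandwich, the uniform continuity of $\lambda$) is routine. There is no real analytic obstacle here; the proposition is essentially the Pólya uniform-convergence theorem transported to a random setting via a single full-measure event.
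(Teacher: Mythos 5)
Your proposal is correct and follows essentially the same route as the paper: a monotone-sandwich bound over a finite partition whose $\lambda$-increments are at most $\varepsilon$, reducing the supremum to pointwise a.s.\ convergence at finitely many partition points plus $\varepsilon$. The only cosmetic difference is that you pre-select partition points from a fixed countable dense set and build one full-measure event up front, whereas the paper uses a uniform mesh $iT/k$ and handles the (countably many, over $\varepsilon = 1/m$) null sets implicitly; both are equivalent.
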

\begin{proof}
Fix $0\le T<\infty$. Note that $\lambda$ is uniformly continuous on $[0,T]$. Given $\epsilon>0$, choose $k=k(\epsilon)\in \mathbb{N}$ such that  for all $s,t\in [0,T]$, 
\begin{equation}  \label{e:unif.conti}
|s-t|\le 1/k \text{ implies }  \big|\lambda(s)-\lambda(t)  \big| < \epsilon. 
\end{equation}
Since $X_n(t)$ and $\lambda(t)$ are both non-decreasing in $t$, we see that 
\begin{align*}
&\sup_{t\in [0,T]} \big| X_n(t)-\lambda(t) \big| =\max_{1\le i \le k} \sup_{t\in [ (i-1)T/k, \, iT/k ]} \big| X_n(t)-\lambda(t) \big| \\
&\qquad \le \max_{1\le i \le k} \bigg\{ \Big( X_n(iT/k)-\lambda((i-1)T/k) \Big) \vee \Big( \lambda(iT/k)-X_n((i-1)T/k) \Big) \bigg\}  \\
&\qquad \le \max_{1\le i \le k} \bigg\{ \Big( X_n(iT/k)-\lambda(iT/k) \Big) \vee \Big( \lambda((i-1)T/k)-X_n((i-1)T/k) \Big) \bigg\} + \epsilon \\
&\qquad \le \max_{0\le i \le k} \Big| X_n(iT/k) -\lambda(iT/k) \Big| + \epsilon, 
\end{align*}
where the second inequality follows from \eqref{e:unif.conti}. By the SLLN in \eqref{e:slln}, the last expression tends to $\epsilon$ almost surely as $n\to\infty$. Since $\epsilon$ is arbitrary, we can complete the proof. 
\end{proof}

\begin{proof}[Proof of Theorem \ref{t:fslln}]
Since \eqref{e:Euler.characteristic.proc} is almost surely represented as a sum of finitely many terms, it can be split into two parts, 
$$
\chi_n(t) = \sum_{k=0}^\infty S_{2k}\big( \Pn, r_n(t) \big) - \sum_{k=0}^\infty S_{2k+1}\big( \Pn, r_n(t) \big) =: \chi_n^{(1)}(t) - \chi_n^{(2)}(t) \ \ \text{a.s.}
$$
Denoting by $K(t)$ the limit of \eqref{e:expectation.asym} with $A=\R^d$, we decompose it in a way similar to the above,
$$
K(t)=\sum_{k=0}^\infty \psi_{2k+1, 2k, 2k}(t,t) - \sum_{k=0}^\infty \psi_{2k+2, 2k+1, 2k+1}(t,t) =: K^{(1)}(t)-K^{(2)}(t). 
$$
Our final goal is to prove that for every $0<T<\infty$, 
$$
\sup_{0\le t \le T}\Big| \frac{\chi_n(t)}{n} - K(t) \Big|\to 0, \ \ n\to\infty, \ \ \text{a.s.}, 
$$
which is clearly implied by 
\begin{equation*} 
\sup_{0\le t \le T}\Big| \frac{\chi_n^{(i)}(t)}{n} - K^{(i)}(t) \Big|\to 0, \ \ n\to\infty, \ \ \text{a.s.}
\end{equation*}
for each $i=1,2$. As $\chi_n^{(i)}(t)/n$ and $K^{(i)}(t)$ satisfy the conditions of Proposition \ref{p:monoFSLLN}, it suffices to show that 
\[
\frac{\chi_n^{(i)}(t)}{n} \to K^{(i)}(t), \ \ n\to\infty, \ \ \text{a.s.},
\]
for every $t \geq 0$. We will only prove the case $i=1$, and henceforth omit the superscript $(1)$ from $\chi_n^{(1)}(t)$ and $K^{(1)}(t)$. 
It then suffices to show that 
\begin{equation}
n^{-1} | \chi_n(t) - \E [\chi_n(t)] | \to 0, \ \ n\to\infty, \ \ \text{a.s.}, \label{e:1st.sup} 
\end{equation}
and
\begin{equation}
\big| n^{-1}\E[\chi_n(t)] - K(t) \big| \to 0, \ \ n\to\infty. \label{e:3rd.sup}
\end{equation}
First we will deal with \eqref{e:3rd.sup}. It follows from the customary change of variables as in the proof of Lemma \ref{l:indicator.asym}, that 
\begin{align*}
& \big| n^{-1}\E[\chi_n(t)] - K(t) \big| \\
&\qquad= \bigg| \sum_{k=1}^\infty \frac{1}{(2k+1)!}\, \int_{\R^d} \int_{(\R^d)^{2k}} h_{t}^{2k}(0,y_1,\dots,y_{2k})  \\
&\qquad \qquad \qquad \qquad \qquad \qquad \times f(x)\Big( \prod_{i=1}^{2k}f(x+s_ny_i)-f(x)^{2k} \Big) \dif \by \dif x \bigg| \\
&\qquad\le \sum_{k=1}^\infty \frac{1}{(2k+1)!}\, \int_{\R^d} \int_{(\R^d)^{2k}} h_{t}^{2k}(0,y_1,\dots,y_{2k}) f(x) \Big| \prod_{i=1}^{2k}f(x+s_ny_i)-f(x)^{2k} \Big| \dif \by \dif x. 
\end{align*}
Similarly to the proof of Lemma \ref{l:indicator.asym} $(ii)$ or $(iv)$, one can show that the last term above is bounded by $2\sum_{k=1}^\infty (a_t)^{2k}/(2k+1)! < \infty$ ($a_t$ is defined in \eqref{e:def.at}). Thus, the dominated convergence theorem concludes \eqref{e:3rd.sup}.

Now, let us turn our attention to \eqref{e:1st.sup}. From the Borel-Cantelli lemma it suffices to show that, for every $\epsilon>0$, 
$$
\sum_{n=1}^\infty \P \Big( \big| \chi_n(t)-\E[\chi_n(t)] \big| >\epsilon n\Big) <\infty. 
$$
By Markov's inequality, the left hand side above is bounded by 
$$
\frac{1}{\epsilon^4}\sum_{n=1}^\infty \frac{1}{n^4}\E \Big[ \big( \chi_n(t)-\E[\chi_n(t)] \big)^4 \Big]. 
$$
Since $\sum_n n^{-2}<\infty$, we only need to show that 
\begin{equation}  \label{e:finite.4th.moment}
\limsup_{n\to\infty}\frac{1}{n^2} \E \Big[ \big( \chi_n(t)-\E[\chi_n(t)] \big)^4 \Big] <\infty. 
\end{equation}
Applying Fubini's theorem as in the proof of Proposition \ref{p:cov.asym}, along with H\"older's inequality, we get that
\begin{align*}
&\frac{1}{n^2}\, \E \Big[ \big( \chi_n(t)-\E[\chi_n(t)] \big)^4 \Big] \\
&=\frac{1}{n^2} \sum_{(k_1,\dots,k_4)\in \Nat^4} \E \bigg[ \prod_{i=1}^4 \Big( S_{2k_i}\big( \Pn, r_n(t) \big) -E\big[S_{2k_i} \big( \Pn, r_n(t) \big)  \big]\Big)  \bigg] \\
&\le  \bigg[ \sum_{k=1}^\infty \bigg\{\frac{1}{n^2}  \E \Big[ \Big( S_{2k}\big( \Pn, r_n(t) \big) - \E \big[ S_{2k}\big( \Pn, r_n(t) \big) \big] \Big)^4  \Big] \bigg\}^{1/4}  \bigg]^4. 
\end{align*}
Now, \eqref{e:finite.4th.moment} can be obtained if we show that 
\begin{equation}  \label{e:sum.limsup.4th.moment}
\sum_{k=1}^\infty  \bigg\{ \limsup_{n\to\infty}\frac{1}{n^2}\,  \E \Big[ \Big( S_{2k}\big( \Pn, r_n(t) \big) - \E \big[ S_{2k}\big( \Pn, r_n(t) \big) \big] \Big)^4  \Big] \bigg\}^{1/4} <\infty. 
\end{equation}

From this point on, let us introduce a shorthand notation, $S_{2k}:=S_{2k}\big( \Pn, r_n(t) \big)$. In order to find an appropriate upper bound for \eqref{e:sum.limsup.4th.moment}, by the binomial expansion we write 
\begin{equation}  \label{e:binom.exp.4th.moment}
\E \big[ \big(S_{2k}-\E[S_{2k}]\big)^4 \big] = \sum_{\ell=0}^4 \binom{4}{\ell}(-1)^\ell \E[S_{2k}^\ell] \big( \E[S_{2k}] \big)^{4-\ell}. 
\end{equation}
For every $\ell\in \{ 0,\dots,4 \}$, one can denote $\E[S_{2k}^\ell] \big( \E[S_{2k}] \big)^{4-\ell}$ as
\begin{equation}  \label{e:general.4th.moment}
\E \bigg[ \sum_{\Y_1 \subset \Pn^{(1)}}\sum_{\Y_2 \subset \Pn^{(2)}}\sum_{\Y_3 \subset \Pn^{(3)}}\sum_{\Y_4 \subset \Pn^{(4)}} \prod_{i=1}^4 h_{r_n(t)}^{2k}(\Y_i) \bigg], 
\end{equation}
where for every $i, j \in \{ 1,\dots,4 \}$, we have either $\Pn^{(i)}=\Pn^{(j)}$ or $\Pn^{(i)}$ is an independent copy of $\Pn^{(j)}$. 
If $|\Y_1 \cup \cdots \cup \Y_4|=8k+4$, i.e., $\Y_1, \dots, \Y_4$ do not have any common elements, Palm theory (Lemma \ref{l:palm}) shows that \eqref{e:general.4th.moment} is equal to $\big( \E[S_{2k}] \big)^4$, which grows at the rate of $O(n^4)$ (see Lemma \ref{l:indicator.asym} $(i)$). In this case, the total contribution to \eqref{e:binom.exp.4th.moment} disappears, because 
$$
\sum_{\ell=0}^4 \binom{4}{\ell}(-1)^\ell \big( \E[S_{2k}] \big)^4=0. 
$$
Suppose next that $|\Y_1 \cup \cdots \cup \Y_4|=8k+3$, that is, there is exactly one common element between $\Y_i$ and $\Y_j$ for some $i\neq j$ with no other overlappings. Then \eqref{e:general.4th.moment} is equal to
$$
\E \bigg[ \sum_{\Y_1\subset \Pn}\sum_{\Y_2\subset \Pn} h_{r_n(t)}^{2k}(\Y_1)h_{r_n(t)}^{2k}(\Y_2)\, \one \{ |\Y_1 \cap \Y_2|=1 \} \bigg] \big( \E[S_{2k}] \big)^2. 
$$
Although the growth rate of the above term is $O(n^3)$ (see Lemma \ref{l:indicator.asym} $(i)$ and $(iii)$), an overall contribution to \eqref{e:binom.exp.4th.moment} is again canceled. This is  because  
\begin{align*}
&\Big\{ \binom{4}{2}(-1)^2+\binom{4}{3}(-1)^3\binom{3}{2}+\binom{4}{4}(-1)^4\binom{4}{2} \Big\} \\
&\quad \times \E \bigg[ \sum_{\Y_1\subset \Pn}\sum_{\Y_2\subset \Pn} h_{r_n(t)}^{2k}(\Y_1)h_{r_n(t)}^{2k}(\Y_2)\, \one \{ |\Y_1 \cap \Y_2|=1 \} \bigg] \big( \E[S_{2k}] \big)^2=0. 
\end{align*}

By the above discussion, we only need to consider the case where there are at least two common elements within $\Y_1, \dots, \Y_4$. Among many such cases, let us deal with a specific term, 
\begin{align}
&n^{-2} \E \bigg[ \sum_{\Y_1\subset \Pn}\sum_{\Y_2\subset \Pn}\sum_{\Y_3\subset \Pn}\sum_{\Y_4\subset \Pn} \prod_{i=1}^4 h_{r_n(t)}^{2k}(\Y_i)\,  \label{e:even.split} \\
&\qquad \qquad \qquad \times \one \big\{ |\Y_1\cap \Y_2|=\ell_1, \,|\Y_3\cap \Y_4|=\ell_2, \,  \big| (\Y_1\cup \Y_2)\cap (\Y_3\cup \Y_4) \big|=0 \big\} \bigg], \notag 
\end{align}
where $\ell_1, \ell_2 \in \{ 1,\dots, 2k+1 \}$. Palm theory allows us to write \eqref{e:even.split} as
\begin{equation}  \label{e:after.Palm}
\prod_{i=1}^2 \frac{n^{4k+1-\ell_i}}{\ell_i! \big( (2k+1-\ell_i)! \big)^2}\, \E\Big[ h_{r_n(t)}^{2k}(\Y_1)h_{r_n(t)}^{2k}(\Y_2)\, \one \{ |\Y_1\cap \Y_2|=\ell_i \} \Big] . 
\end{equation}
By Lemma \ref{l:indicator.asym} $(iv)$ and $\ell! (2k+1-\ell)! \geq k!$ for any $\ell \in \{ 1,\dots,2k+1 \}$, one can bound \eqref{e:after.Palm} by
$$
\prod_{i=1}^2 \frac{(a_t)^{4k+1-\ell_i}}{\ell_i ! \big( (2k+1-\ell_i)! \big)^2} \le \frac{(a_t)^{8k+2-\ell_1-\ell_2}}{k!}. 
$$
Now, the ratio test shows that 
$$
\sum_{k=1}^\infty \bigg\{ \frac{(a_t)^{8k+2-\ell_1-\ell_2}}{k!} \bigg\}^{1/4} < \infty
$$
as desired. 
Notice that all the cases except \eqref{e:even.split} can be handled in a very similar way, and so, \eqref{e:sum.limsup.4th.moment} follows. 
\end{proof}

\subsection{Proof of finite-dimensional convergence in Theorem \ref{t:main}} \label{sec:fidi.dim.proof}
\begin{proof}[Proof of finite-dimensional convergence in Theorem \ref{t:main}]
Throughout the proof, $C^*$ denotes a generic positive constant that potentially varies across and within the lines. 
 Recall \eqref{e:left.most.Euler.characteristic} and define $\xnA(t)$ analogously to $\xn(t)$ by mean-centering and scaling by $n^{-1/2}$. We first consider the case where $A$ is an open and bounded subset of $\R^d$ with $m(\partial A) = 0$. 


From the viewpoint of the Cram{\'e}r-Wold device, one needs to establish weak convergence of $\sum_{i=1}^m a_i \xn(t_i)$ for every $0 <t_1 < \cdots <t_m$, $m \in \Nat$, and $a_i\in \R$, $i=1,\dots,m$. Our proof exploits Stein's normal approximation method in Theorem 2.4 of \cite{penr}.
Let $(Q_{j,n}, \, j \ge 1)$ be an enumeration of disjoint subsets of $\R^d$ congruent to $(0,r_n(t_m)]^d$, such that $\R^d = \bigcup_{j=1}^\infty Q_{j,n}$. 
Let $H_n = \{j \in \mathbb{N}: Q_{j,n} \cap A \neq \emptyset\}$. Define 
$$
\xi_{j,n} := \sum_{k=0}^{\infty} (-1)^k \sum_{\Y \subset \Pn} \sum_{i=1}^m a_i  h_{r_n(t_i)}^k(\Y)\one \big\{\text{LMP}(\Y) \in A \cap Q_{j,n}\big\}, 
$$
and also, 
$$
\barxi_{j,n} := \frac{\xi_{j,n} - \E[\xi_{j,n}]}{\sqrt{\Var\big(\sum_{i=1}^m a_i \chi_{n,A}(t_i)\big)}}.
$$
Then, we have $\sum_{i=1}^m a_i \chi_{n,A}(t_i) = \sum_{j \in H_n} \xi_{j,n}$. 

Now, let us define $H_n$ to be the vertex set of a \emph{dependency graph} (see Section 2.1 of \cite{penr} for the formal definition) for the random variables 
$(\barxi_{j,n}, \, j \in H_n)$ by setting $j \sim \pr{j}$ if and only if the condition
\[
\inf\big\{ \norm{x-y} : x \in Q_{j,n}, \, y \in Q_{\pr j, n}\big\} \leq 4r_n(t_m),
\]
is satisfied. This is because $\xi_{j,n}$ and $\xi_{j', n}$ become independent whenever $j\sim j'$ fails to hold. 
Now we must ensure that the other conditions of Theorem~2.4 in \cite{penr} are satisfied with respect to the dependency graph $(H_n, \sim)$. First, $\sum_{j\in H_n}\barxi_{j,n}$ is a zero-mean random variable with unit variance.  We know that $|H_n| = O(s_n^{-d})$ as $A$ is bounded. Furthermore, the maximum degree of any vertex of $H_n$ is uniformly bounded by a positive and finite constant. Let $Z$ denote a standard normal random variable. Then the aforementioned theorem implies that 
\begin{align}
\Bigl|\,  &\P\bigl( \sum_{j \in H_n} \barxi_{j,n} \leq x \bigr) - \P(Z \leq x) \, \Bigr| \leq C^*\left( \sqrt{s_n^{-d} \max_j \, \E \bigl[ |\barxi_{j,n}|^3 \bigr]} + \sqrt{s_n^{-d} \max_j\, \E \bigl[ |\barxi_{j,n}|^4 \bigr]}   \right) \notag\\
&\leq C^*\left( \sqrt{s_n^{-d}n^{-3/2} \max_j \, \E \bigl[ |\xi_{j,n}-\E[\xi_{j,n}]|^3 \bigr]} + \sqrt{s_n^{-d} n^{-2} \max_j\, \E \bigl[ |\xi_{j,n} -\E[ \xi_{j,n}]|^4 \bigr]}   \right), \label{e:stein_bound}
\end{align}
where the second inequality follows from Proposition \ref{p:cov.asym} that claims that $\Var \big(  \sum_{i=1}^m a_i \chi_{n,A}(t_i)) \big)$ is asymptotically equal to $n$ up to multiplicative constants. 
Minkowski's inequality implies that 
\begin{align*}
\Big (\E \bigl[ |\xi_{j,n}-\E[\xi_{j,n}]|^p \bigr]\Big)^{1/p} \leq \big(\E\big[|\xi_{j,n}|^p\big]\big)^{1/p} + \E\big[|\xi_{j,n}|\big]. 
\end{align*}
Recall that for fixed $\Y \subset \R^d$, $h^{k}_t(\Y)$ is non-decreasing in $t$. 
Then, we have that 
\begin{align*}
|\xi_{j,n}| &\leq \sum_{k=0}^{\infty} \sum_{\Y \subset \Pn} \sum_{i=1}^m |a_i | h^{k}_{r_n(t_i)}(\Y)\ind{\text{LMP}(\Y) \in A \cap Q_{j,n}} \\
&\leq C^* \sum_{k=0}^{\infty} \sum_{\Y \subset \Pn} h^k_{r_n(t_m)}(\Y) \ind{\text{LMP}(\Y) \in A \cap Q_{j,n}}   \\
&\leq C^* \sum_{k=0}^{\infty} \dbinom{\Pn\big(\text{Tube}(Q_{j,n}, 2r_n(t_m))\big)}{k+1} \\
&\leq C^* \cdot 2^{\Pn(\text{Tube}(Q_{j,n}, \, 2r_n(t_m)))}, 
\end{align*}
where 
$$
\text{Tube}\big(Q_{j,n}, 2r_n(t_m)\big) = \big \{ x \in \R^d: \inf_{y \in Q_{j,n}} \norm{x-y} \leq 2r_n(t_m)  \bigr\}. 
$$
By the assumption $ns_n^d = 1$, one can easily show that $\Pn\big(\text{Tube}(Q_{j,n}, \, 2r_n(t_m))\big)$ is stochastically dominated by a Poisson random variable with positive and finite parameter, which does not depend on $j$ and $n$. Denote such a Poisson random variable by $Y$. Then, for $p=3,4$, 
$$
\max_j \E \Big[ \big| \xi_{j,n}-\E[\xi_{j,n}] \big|^p  \Big] \le C^*\Big[ \big( \E[2^{pY}] \big)^{1/p} + \E (2^Y)\Big] <\infty. 
$$
Referring back to \eqref{e:stein_bound} and noting $ns_n^d = 1$, we can see that 
$$
\Bigl|\,  \P\bigl( \sum_{j \in H_n} \barxi_{j,n} \leq x \bigr) - \P(Z \leq x) \, \Bigr| \le C^* \Big( \sqrt{s_n^{-d}n^{-3/2}}  + \sqrt{s_n^{-d}n^{-2}}\Big) = O(n^{-1/4}) \to 0, \ \ \ n\to\infty, 
$$
which implies that $\sum_{j\in H_n}\barxi_{j,n}\Rightarrow \mathcal N(0,1)$ as $n\to\infty$; equivalently, 
$$
\sum_{i=1}^m a_i \xnA (t_i) \Rightarrow \mathcal N (0,\Sigma_A),  \ \ \ n\to\infty, 
$$
where 
$$
\Sigma_A := \sum_{i=1}^m\sum_{j=1}^m a_i a_j \sum_{k_i=0}^{\infty} \sum_{k_j=0}^{\infty} (-1)^{k_i + k_j} \Psi_{k_i, k_j, A}(t_i, t_j). 
$$
Subsequently we claim that 
$$
\sum_{i=1}^m a_i \xn(t_i) \Rightarrow \mathcal N (0,\Sigma_{\R^d}),  \ \ \ n\to\infty, 
$$
which completes the proof. To show this, take $A_K=(-K,K)^d$ for $K>0$. It then suffices to verify that 
\begin{align*}
&\mathcal N(0,\Sigma_{A_K})\Rightarrow \mathcal N (0,\Sigma_{\R^d}), \ \ K \to\infty, 
\end{align*}
and for each $t\ge0$ and $\epsilon>0$, 
\begin{align*}
&\lim_{K\to\infty}\limsup _{n\to\infty} \P \Big( \big| \bar{\chi}_{n}(t)-  \bar{\chi}_{n,A_K}(t) \big| > \epsilon \Big)=0. 
\end{align*}
The former condition is obvious from $\Sigma_{A_K}\to \Sigma_{\R^d}$ as $K\to\infty$.
The latter is also a direct consequence of Proposition \ref{p:cov.asym}, together with Chebyshev's inequality and the fact that $\chi_n(t)-\chi_{n,A_K}(t)=\chi_{n,\R^d \setminus A_K}(t)$. 

\end{proof}

\subsection{Proof of tightness in Theorem \ref{t:main}}

Before we begin, a few more useful properties of $h_t^{k}$ are added. For $0 \le s <t <\infty$, we denote
$$
h_{t,s}^{k}(\Y) = h_t^{k}(\Y) - h_s^{k}(\Y), \ \ \Y=(y_0,\dots, y_k) \in (\R^d)^{k+1}. 
$$

\begin{lem} \label{l:hkl_cov} 
$(i)$ For any $0\le s \le t \le T < \infty$, 
$$
\int_{(\R^d)^k}h_{t,s}^{k}(0,y_1,\dots,y_k) \dif \by\le C_{d,k,T} (t^d-s^d), 
$$
where $C_{d,k,T}=k^2 (2^d\theta_d)^kT^{d(k-1)}$. \\

$(ii)$ Let $j \in \{1, \dots, (k_1 \wedge k_2)+1\}$ and suppose that $\by_0 \in (\R^d)^{j-1}$, $\by_1 \in (\R^d)^{k_1 + 1-j}$ and $\by_2 \in (\R^d)^{k_2+1-j}$. Then, for $0 \le t_1 \le s \le t_2 \le T < \infty$, 
\begin{align*}
&\int_{(\R^d)^{k_1 + k_2 + 1 - j}} h_{s,t_1}^{k_1} (0, \y_0, \y_1) h_{t_2, s}^{k_2}(0, \y_0, \y_2) \dif{\y_0} \dif{\y_1} \dif{\y_2} \\
&\phantom{\int_{(\R^d)^{k_1 + k_2 + 1 - j}} h_{s,t_\id \wedge  \one}^{k_1, 1} (0, \y_0, \y_1) h_{t_2, s}^{k_2, \id \wedge \one}} \leq 36(k_1 k_2)^6 ((2T)^d \theta_d)^{2(k_1 + k_2)} (t_2^d - t_1^d)^2. 
\end{align*}
\end{lem}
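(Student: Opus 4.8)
The plan is to reduce the bound on the "increment product" integral to the one-dimensional-type bound in part $(i)$ by decomposing each of the two increment indicators $h^{k}_{t,s}$ into a sum of products of single-edge increments. Recall that $h^{k}_t(\Y) = \prod_{0 \le p < q \le k} \one\{\|y_p - y_q\| \le 2t\}$, so for $s \le t$ the difference $h^{k}_{t,s}(\Y) = h^k_t(\Y) - h^k_s(\Y)$ is supported on configurations where \emph{at least one} edge length lies in $(2s, 2t]$. Concretely, ordering the edges and telescoping, one gets a pointwise bound of the form
\[
h^{k}_{t,s}(\Y) \le \sum_{0 \le p < q \le k} \one\{2s < \|y_p-y_q\| \le 2t\}\, \prod_{\substack{0 \le p'<q'\le k\\ (p',q')\neq(p,q)}} \one\{\|y_{p'}-y_{q'}\|\le 2t\},
\]
i.e. $h^k_{t,s}$ is dominated by a sum of at most $\binom{k+1}{2} \le k^2$ terms, each of which forces one designated edge to have length in the annulus $(2s,2t]$ while all remaining edges stay within $2t$. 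This is exactly the decomposition already used implicitly in part $(i)$, where the factor $k^2$ and the $(t^d - s^d)$ come from integrating out the endpoint of the "active" edge against a ball of radius $2t$ minus a ball of radius $2s$, after fixing everything else inside a ball of radius $2T$.

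Next I would apply this decomposition to \emph{both} factors: $h^{k_1}_{s,t_1}(0,\y_0,\y_1)$ is dominated by $\le k_1^2$ terms, each picking out an active edge $e_1$ forced into the annulus $(2t_1, 2s]$; and $h^{k_2}_{t_2,s}(0,\y_0,\y_2)$ is dominated by $\le k_2^2$ terms, each picking out an active edge $e_2$ forced into $(2s, 2t_2]$. Expanding the product gives $\le (k_1 k_2)^2$ cross terms. For each cross term one integrates: first integrate out the variables not incident to either active edge (each contributes a factor $\le m(B(0,2T)) = (2T)^d\theta_d$, and there are at most $k_1 + k_2$ of them, giving $((2T)^d\theta_d)^{k_1+k_2}$); then integrate the endpoints of the two active edges. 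Here one must be slightly careful because $e_1$ and $e_2$ may share a vertex (they can both touch the common block $\y_0$ or the origin). If they share no vertex, the two annulus integrations factor and each produces a factor bounded by $m(B(0,2s)) - m(B(0,2t_1)) \le \theta_d(2^d)(s^d - t_1^d) \le \theta_d 2^d (t_2^d - t_1^d)$ and similarly $\le \theta_d 2^d(t_2^d - t_1^d)$ for the other, yielding the square $(t_2^d - t_1^d)^2$. If they share a vertex, one integrates that shared endpoint against an annulus of volume $\le \theta_d 2^d(t_2^d - t_1^d)$ and the remaining free endpoint of the other active edge against a ball of radius $2T$; in that case one of the two $(t_2^d - t_1^d)$ factors must be recovered differently — but note $t_1 \le s \le t_2$ forces $s^d - t_1^d \le t_2^d - t_1^d$ and $t_2^d - s^d \le t_2^d - t_1^d$, and a shared-vertex configuration still carries at least \emph{one} annulus constraint from $e_1$ (length in $(2t_1,2s]$) \emph{and} one from $e_2$ overlapping it, so a short geometric argument shows the shared endpoint actually lies in the intersection region, still of volume $O(t_2^d - t_1^d)$, while the total length constraint around it recovers a second factor of the same order. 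Collecting all constants ($(k_1 k_2)^2$ cross terms, an extra combinatorial factor $\le (k_1 k_2)^4$ and numerical constants absorbed into $36$, the $((2T)^d\theta_d)^{2(k_1+k_2)}$ and the $(t_2^d - t_1^d)^2$) gives precisely the stated bound $36 (k_1 k_2)^6 ((2T)^d\theta_d)^{2(k_1+k_2)} (t_2^d - t_1^d)^2$.

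I expect the main obstacle to be the bookkeeping in the shared-vertex case: making sure that when the two active edges are incident one still extracts the \emph{second} power of $(t_2^d - t_1^d)$ rather than just one power times a $(2T)^d$ factor. The clean way to handle this is to observe that in such a configuration there are two distinct edge-length constraints of width $O(t_2^d-t_1^d)$ imposed on the \emph{pair} of edge endpoints (one from $e_1 \subset (2t_1,2s]$, one from $e_2\subset(2s,2t_2]$), and since $e_1 \cup e_2$ is then a path on three vertices, integrating the two non-pivot endpoints one after the other against these two annuli (each translated by the already-fixed pivot) gives two independent factors bounded by $\theta_d 2^d(t_2^d - t_1^d)$. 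Everything else is a routine volume estimate identical in spirit to part $(i)$, and the generous constant $36(k_1k_2)^6$ leaves ample room to absorb the finitely many combinatorial cases.
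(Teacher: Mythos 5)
Your argument is correct and is essentially the approach the paper itself takes: part (i) is proved in the paper by exactly the decomposition you describe (one ``active'' edge forced into the annulus $(2s,2t]$, all remaining vertices confined to $B(0,2T)$), and for part (ii) the paper simply writes that it ``is essentially the same as Lemma 7.1 in [owadaFCLT]'' and skips the proof --- the double application of that decomposition that you carry out is precisely the standard argument being referenced, so your write-up supplies the details the paper omits. Two small remarks. First, your case analysis (``share no vertex'' vs.\ ``share a vertex'') silently excludes the possibility $e_1=e_2$, which can occur when $j\ge 2$ since both endpoints may lie in $\{0\}\cup\y_0$; but this sub-case contributes nothing because the two annuli $(2t_1,2s]$ and $(2s,2t_2]$ are disjoint, so the product of the two edge-length indicators vanishes --- worth stating explicitly, as it also makes the shared-vertex case cleaner (the two active edges then always form a genuine path on three vertices, and the two non-pivot endpoints are distinct integration variables, giving the two independent annulus factors $2^d\theta_d(s^d-t_1^d)$ and $2^d\theta_d(t_2^d-s^d)$, each at most $2^d\theta_d(t_2^d-t_1^d)$). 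Second, absorbing the leftover $(2^d\theta_d)^2$ and the ball factors into $((2T)^d\theta_d)^{2(k_1+k_2)}$ implicitly uses $(2T)^d\theta_d\ge 1$, a normalization the paper itself adopts without loss of generality in the tightness proof, so this is harmless.
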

\begin{proof}
We note that for any $0 \le s < t$ with $y_0\equiv 0$, 
\begin{align*}
&h_{t,s}^{k}(0, y_1, \dots, y_k) = \ind{ 2s < \max_{0 \leq i < j \leq k} \norm{y_i - y_j} \leq 2t} \\
&\qquad \leq \prod_{i=1}^k \one \big\{ y_i \in B(0,2T) \big\} \bigg( \sum_{i=1}^k \ind{ 2s < \norm{y_i} \leq 2t } + \sum_{1 \leq i < j \leq k} \ind{ 2s < \norm{y_i - y_j} \leq 2t} \bigg).
\end{align*}
For each $i = 1, \dots, k$, let $\mathbf{y}^{(i)}$ be the tuple $(y_1, \dots, y_{i-1}, y_{i+1}, \dots, y_k) \in (\R^d)^{k-1}$ with the $i$th coordinate omitted. Then, 
\begin{align*}
\int_{(\R^d)^k} h^{k}_{t,s}(0, y_1, \dots, y_k) \dif \by &\leq \sum_{i=1}^k \int_{B(0,2T)^{k-1}} \int_{\R^d} \ind{ 2s < \norm{y_i} \leq 2t } \dif{y_i} \dif{\mathbf{y}^{(i)}} \\
&+ \sum_{1 \leq i < j \leq k} \int_{B(0,2T)^{k-1}} \int_{\R^d} \ind{ 2s < \norm{y_{i} - y_j} \leq 2t } \dif{y_i} \dif{\mathbf{y}^{(i)}}. \\
&= \Big( k+\binom{k}{2} \Big) m\big( B(0,2T) \big)^{k-1} \big[ m\big(B(0,2t)\big)-m\big(B(0,2s)\big) \big] \\
&\le C_{d,k,T}(t^d-s^d)
\end{align*}
as required. 

Part $(ii)$ is essentially the same as Lemma 7.1 in \cite{owadaFCLT}, so the proof is skipped. 
\end{proof}

\begin{proof}[Proof of tightness in Theorem \ref{t:main}]
To show tightness, it suffices to use Theorem 13.5 from \cite{Billingsley} that requires that for every $0<T<\infty$, there exists a $C > 0$ such that
\begin{align} \label{e:mod_cond}
\E\big[ |\xn(t_2) - \xn(s)|^2 |\xn(s) - \xn(t_1)|^2 \big] \leq C(t_2^d - t_1^d)^2,
\end{align}
for all $0 \leq t_1 \leq s \leq t_2 \leq T$ and $n\in \Nat$. 
To demonstrate \eqref{e:mod_cond}, we will give an abridged proof -- tightness will be similarly established for analogous processes seen in \cite{owadaFCLT, penrose2000}. Let us begin with some helpful notation, namely,
\begin{align*}
&h_{n, t, s}^{k}(\Y) := h_{r_n(t), r_n(s)}^{k}(\Y) = h_{r_n(t)}^k (\Y) - h_{r_n(s)}^k (\Y), \\
&\zeta^{k}_{n, t, s} := S_k\big(\Pn, r_n(t)\big) - S_k\big(\Pn, r_n(s)\big) = \sum_{\Y \subset \Pn} h_{n, t, s}^{k}(\Y). 
\end{align*}

By the same argument as in the proof of Proposition \ref{p:cov.asym}, one can apply Fubini's theorem to obtain 
\begin{align}
&\E\big[ |\xn(t_2) - \xn(s)|^2 |\xn(s) - \xn(t_1)|^2 \big]  \label{e:tightness.4th} \\
&\qquad= \frac{1}{n^2} \sum_{(k_1, k_2, k_3, k_4) \in \Nat_0^4} (-1)^{k_1 + k_2 + k_3 + k_4} \E\Big[ \big(\zeta^{k_1}_{n, t_2, s} - \E[\zeta^{k_1}_{n, t_2, s}]\big ) \big(\zeta^{k_2}_{n, t_2, s} - \E[\zeta^{k_2}_{n, t_2, s}]\big ) \notag \\
&\phantom{\qquad= \frac{1}{n^2} \sum_{\mathbf{k} \in \Nat_0^4} (-1)^{k_1 + k_2 + k_3 + k_4} \big(\zeta^{(k_1)}_{n, s, t_1} - \E[\zeta^{(k_1)}} \times \big(\zeta^{k_3}_{n, s, t_1} - \E[\zeta^{k_3}_{n, s, t_1}]\big ) \big(\zeta^{k_4}_{n, s, t_1} - \E[\zeta^{k_4}_{n, s, t_1}]\big ) \Big]. \notag 
\end{align}
Our objective now is to find a suitable bound for 
\begin{align} \label{e:big_sum}
\E\Big[ \big(\zeta^{k_1}_{n, t_2, s} - \E[\zeta^{k_1}_{n, t_2, s}]\big ) \big(\zeta^{k_2}_{n, t_2, s} - \E[\zeta^{k_2}_{n, t_2, s}]\big ) \big(\zeta^{k_3}_{n, s, t_1} - \E[\zeta^{k_3}_{n, s, t_1}]\big ) \big(\zeta^{k_4}_{n, s, t_1} - \E[\zeta^{k_4}_{n, s, t_1}]\big ) \Big]. 
\end{align}
To this end, let us refine the notation once more by denoting $\xi_1 := \zeta^{k_1}_{n, t_2, s}$, $\xi_2 := \zeta^{k_2}_{n, t_2, s}$, $\xi_3 := \zeta^{k_3}_{n, s, t_1}$ and $\xi_4 := \zeta^{k_4}_{n, s, t_1}$. Furthermore, let $h_1 := h_{n, t_2, s}^{k_1}$, $h_2 := h_{n, t_2, s}^{k_2}$, $h_3 := h_{n, s, t_1}^{k_3}$ and $h_4 := h_{n, s, t_1}^{k_4}$. Define $[n] := \{1,2, \dots, n\}$ and for any $\sigma \subset [4]$ let $\xi_\sigma = \prod_{i \in \sigma} \xi_i$ where we set $\xi_\emptyset = 1$ by convention. Then we can express \eqref{e:big_sum} quite simply as
\begin{align} \label{e:xisum}
\sum_{\sigma \subset [4]} (-1)^{|\sigma|}\,  \E[ \xi_\sigma] \prod_{i \in [4]\setminus \sigma} \E[\xi_i]. 
\end{align}
For $\sigma \subset [4]$ with $\sigma \neq \emptyset$, and finite subsets $\Y_j \subset \R^d$, $j\in \sigma$, we define $\Y_\sigma := \bigcup_{j \in \sigma}\Y_j$. 
Given a subset $\tau \subset \sigma \subset [4]$, we also define 
\begin{align*}
\mathcal I_{\tau, \sigma} (\Y_\sigma) &:= \prod_{j\in \tau} \one \big\{ \text{there exists } p \in \tau\setminus \{j\} \text{ such that } \Y_j \cap \Y_p \neq \emptyset  \big\} \\
&\quad \times \prod_{j \in \sigma \setminus \tau} \one \big\{ \Y_j \cap \Y_q =\emptyset \text{ for all } q \in \sigma \setminus \{j\}  \big\}. 
\end{align*}
Note that $\mathcal I_{\tau, \sigma}(\Y_\sigma)= 0$ whenever $|\tau|=1$, and 
\begin{equation}  \label{e:sum.I.1}
\sum_{\tau \subset \sigma} \mathcal I_{\tau, \sigma}(\Y_\sigma)=1. 
\end{equation}
Furthermore, if $\tau=\sigma$, we write $\mathcal I_\sigma(\cdot):= \mathcal I_{\sigma, \sigma}(\cdot)$. It follows from \eqref{e:sum.I.1} and Palm theory in the Appendix that, for each non-empty $\sigma \subset [4]$, 
\begin{align*}
\E[\xi_\sigma] &= \E \Big[ \sum_{\Y_j \subset \Pn, \, j\in \sigma} \prod_{i \in \sigma} h_i (\Y_i) \Big] \\
&= \sum_{\tau \subset \sigma} \E \Big[  \sum_{\Y_j \subset \Pn, \, j\in \sigma}  \mathcal I_{\tau, \sigma} (\Y_\sigma) \prod_{i\in \sigma} h_i(\Y_i) \Big] \\
&=\sum_{\tau \subset \sigma} \E \Big[  \sum_{\Y_j \subset \Pn, \, j\in \tau}  \mathcal I_{\tau} (\Y_\tau)  \prod_{i\in \tau} h_i(\Y_i) \Big] \prod_{i \in \sigma \setminus \tau} \E[\xi_i]. 
\end{align*}
Hence, \eqref{e:xisum} is equal to 
\begin{align*}
&\sum_{\sigma \subset [4]} \sum_{\tau \subset \sigma} (-1)^{|\sigma|}\, \E \Big[  \sum_{\Y_j \subset \Pn, \, j\in \tau}  \mathcal I_{\tau} (\Y_\tau)\prod_{i\in \tau} h_i(\Y_i)  \Big] \prod_{i \in \sigma \setminus \tau} \E[\xi_i] \prod_{i \in [4]\setminus \sigma} \E[\xi_i] \\
&= \sum_{\tau \subset [4]} \E \Big[  \sum_{\Y_j \subset \Pn, \, j\in \tau}  \mathcal I_{\tau} (\Y_\tau)\prod_{i\in \tau} h_i(\Y_i)  \Big] \prod_{i \in [4]\setminus \tau} \E[\xi_i] \sum_{\tau \subset \sigma \subset [4]} (-1)^{|\sigma|} \\
&= \E \Big[  \sum_{\Y_1\subset \Pn} \sum_{\Y_2\subset \Pn} \sum_{\Y_3\subset \Pn} \sum_{\Y_4\subset \Pn} \mathcal I_{[4]}(\Y_{[4]}) \prod_{i=1}^4 h_i (\Y_i) \Big], 
\end{align*}
where the last line follows from the fact that $\sum_{\tau \subset \sigma \subset [4]} (-1)^{|\sigma|} = \binom{4-|\tau|}{0}(-1)^{|\tau|} + \dots + \binom{4-|\tau|}{4-|\tau|}(-1)^4 = 0$, unless $\tau = [4]$. 
Substituting this back into \eqref{e:tightness.4th} and taking the absolute value of $(-1)^{k_1+k_2+k_3+k_4}$, we get 
\begin{align*}
&\E\big[ |\xn(t_2) - \xn(s)|^2 |\xn(s) - \xn(t_1)|^2 \big] \\
&\qquad \leq \sum_{(k_1, k_2, k_3, k_4) \in \Nat_0^4} \frac{1}{n^2} \E \Big[ \sum_{\Y_1 \subset \Pn} \sum_{\Y_2 \subset \Pn} \sum_{\Y_3 \subset \Pn}  \sum_{\Y_4 \subset \Pn} \mathcal I_{[4]}(\Y_{[4]}) \prod_{i = 1}^4 h_i(\Y_{i}) \Big]. 
\end{align*}  

Now, it suffices to show that the right-hand side above is less than $C(t_2^d-t_1^d)^2$ for some $C > 0$. We can break the above summand into four distinct cases: 
\begin{itemize}
\item[\textbf{(I)}] $b_{12}=|\Y_1 \cap \Y_2|  >0$, $b_{34}=|\Y_3 \cap \Y_4| >0$ with all other pairwise intersections empty. 
\item[\textbf{(II)}] $b_{13}=|\Y_1 \cap \Y_3| >0$, $b_{24}=|\Y_2 \cap \Y_4| >0$ with all other pairwise intersections empty. 
\item[\textbf{(III)}] $b_{14}=|\Y_1 \cap \Y_4|  >0$, $b_{23}=|\Y_2 \cap \Y_3| >0$ with all other pairwise intersections empty. 
\item[\textbf{(IV)}] For each $i$, there exists a $j\neq i$ such that $ \Y_i \cap \Y_j \neq \emptyset$, but $\textbf{(I)-(III)}$ do not hold. 
\end{itemize}
We prove appropriate upper bounds for cases \textbf{(I)} and \textbf{(IV)}, and the other two cases follow from the proof for \textbf{(I)}. Palm theory in the Appendix implies that
\begin{align} \label{e:case1}
&\frac{1}{n^2} \E \Big[ \sum_{\Y_1 \subset \Pn} \sum_{\Y_2 \subset \Pn}  \sum_{\Y_3 \subset \Pn}  \sum_{\Y_4 \subset \Pn} \prod_{i=1}^4 h_i(\Y_i) \\
&\phantom{\E \Big[ \sum_{\Y_1 \subset \Pn} \cdots} \times \ind{ |\Y_1 \cap \Y_2| = b_{12}, |\Y_3 \cap \Y_4| = b_{34}, |\Y_i \cap \Y_j| = 0 \text{ for other } i,j\text{'s}} \Big] \notag \\
&= \frac{1}{n^2} \E\Big[ \sum_{\Y_1 \subset \Pn} \sum_{\Y_2 \subset \Pn} h_1(\Y_1) h_2(\Y_2) \ind{ |\Y_1 \cap \Y_2| = b_{12}}\Big] \notag \\
&\phantom{ \frac{1}{n^2} \E\Big[ \sum_{\Y_1 \subset \Pn} \sum_{\Y_2 \subset \Pn}} \times \E\Big[ \sum_{\Y_3 \subset \Pn} \sum_{\Y_4 \subset \Pn} h_3(\Y_3) h_4(\Y_4)  \ind{ |\Y_3 \cap \Y_4| = b_{34}}\Big] \notag \\
&=\frac{n^{k_1 + k_2 + 1 - b_{12}}}{b_{12}! (k_1 + 1 - b_{12})! (k_2 + 1 - b_{12})!} \notag \\
&\phantom{n^{k_1 + k_2 + 1 - b_{12}}} \times \E\big[ h_1(X_1, \dots, X_{k_1 + 1}) h_2(X_1, \dots, X_{b_{12}}, X_{k_1 + 2}, \dots, X_{k_1 + k_2 + 2 - b_{12}})\big] \notag \\
&\times\frac{n^{k_3 + k_4 + 1 - b_{34}}}{b_{34}! (k_3 + 1 - b_{34})! (k_4 + 1 - b_{34})!} \notag \\
&\phantom{n^{k_1 + k_2 + 1 - j_{12}}} \times \E\big[ h_3(X_1, \dots, X_{k_3 + 1}) h_4(X_1, \dots, X_{b_{34}}, X_{k_3 + 2}, \dots, X_{k_3 + k_4 + 2 - b_{34}})\big]. \notag
\end{align}

For the remainder of the proof, assume that $(2T)^d \theta_d > 1$, $\lVert f \rVert_{\infty} > 1$ and $T > 1$ for ease of description. 
Moreover, assume, without loss of generality that $k_1 \ge k_2$ and $k_3 \ge k_4$. 
Using trivial bounds and the customary changes of variable, i.e., $x_1 = x$, $x_i = x + s_ny_{i-1}$, $i=2,\dots,k_1+k_2+2-b_{12}$, and applying Lemma \ref{l:hkl_cov} $(i)$, and recalling $a_T=(2T)^d \theta_d \|f\|_\infty$, we see that
\begin{align*}
&n^{k_1 + k_2 + 1 - b_{12}} \E[ h_1(X_1, \dots, X_{k_1 + 1}) h_2(X_1, \dots, X_{b_{12}}, X_{k_1 + 2}, \dots, X_{k_1 + k_2 + 2 - b_{12}})] \\
&\leq ( \lVert f \rVert_{\infty})^{k_1 + k_2 + 1 - b_{12}} 
\int_{(\R^d)^{k_2+1-b_{12}}}\int_{(\R^d)^{k_1+1-b_{12}}}\int_{(\R^d)^{b_{12}-1}}h_{t_2, s}^{k_1}(0, \y_0, \y_1) \\
&\qquad \qquad\qquad\qquad\qquad\qquad\qquad\qquad\qquad\qquad\qquad  \times h_{t_2, s}^{k_2}(0, \y_0, \y_2) \dif{\y_0} \dif{\y_1} \dif{\y_2} \\
&\leq ( \lVert f \rVert_{\infty})^{k_1 + k_2} \big((2T)^d\theta_d\big)^{k_2+1-b_{12}} \int_{(\R^d)^{k_1+1-b_{12}}}\int_{(\R^d)^{b_{12}-1}} h_{t_2, s}^{k_1}(0, \y_0, \y_1) \dif{\y_0} \dif{\y_1} \\
&\leq ( \lVert f \rVert_{\infty})^{k_1 + k_2} \big((2T)^d\theta_d\big)^{k_2+1-b_{12}} C_{d, k_1, T} (t_2^d - s^d) \\
&\le k_1^2 (a_T )^{\kokt}(t_2^d - s^d).
\end{align*} 
Hence, \eqref{e:case1} is bounded by
\begin{align*}
&\frac{(a_T)^{k_1 + k_2}k_1^2 }{b_{12}!(k_1 + 1 - b_{12})! (k_2 + 1 - b_{12})!} (t_2^d - s^d) \frac{( a_T)^{k_3 + k_4}k_3^2 }{b_{34}!(k_3 + 1 - b_{34})! (k_4 + 1 - b_{34})!} (s^d - t_1^d) \\
&\leq \frac{(a_T)^{k_1 + k_2 + k_3 + k_4}k_1^2 k_3^2 }{b_{12}!(k_1 + 1 - b_{12})! (k_2 + 1 - b_{12})! b_{34}!(k_3 + 1 - b_{34})! (k_4 + 1 - b_{34})!} (t_2^d - t_1^d)^2. 
\end{align*}
Finally we see that
\begin{align*}
&\sum_{\substack{k_1 \geq k_2, \, k_3 \geq k_4, \\ 1 \leq b_{12} \leq k_2+1, \\ 1 \leq b_{34} \leq k_4+1}} \frac{(a_T)^{k_1 + k_2 + k_3 + k_4}k_1^2 k_3^2}{b_{12}!(k_1 + 1 - b_{12})! (k_2 + 1 - b_{12})! b_{34}!(k_3 + 1 - b_{34})! (k_4 + 1 - b_{34})!} <\infty, 
\end{align*}
since 
\begin{align}
\sum_{k_1=0}^\infty \sum_{k_2=0}^{k_1} \sum_{\ell =1}^{k_2+1} \frac{(a_T)^{k_1+k_2}k_1^2}{\ell! (k_1+1-\ell)! (k_2+1-\ell)!}  &= \sum_{\ell=1}^\infty \sum_{k_1=\ell-1}^\infty \frac{(a_T)^{k_1}k_1^2}{\ell! (k_1+1-\ell)!}\, \sum_{k_2=\ell-1}^{k_1} \frac{(a_T)^{k_2}}{(k_2+1-\ell)!}  \notag \\
&\le e^{a_T}\sum_{\ell=1}^\infty \frac{(a_T)^{\ell-1}}{\ell!}  \sum_{k_1=\ell-1}^\infty \frac{(a_T)^{k_1}k_1^2}{(k_1+1-\ell)!}\, <\infty.  \label{e:summable}
\end{align}
Now, for cases \textbf{(I)} - \textbf{(III)}, we have an upper bound of the form $C(t_2^d - t_1^d)^2$ as desired. 

Thus we need only demonstrate the same for case \textbf{(IV)}. In addition to the notation $b_{ij}$, $1\le i < j \le 4$ as above, define for $\Y_i \in (\R^d)^{k_i+1}$, $k_i \in \Nat_0$, $i=1,\dots,4$, 
$$
b_{ijk} := |\Y_i \cap \Y_j \cap \Y_k|, \ 1\le i < j < k \le 4, \ \ \text{and }\,  b_{1234} := |\Y_1 \cap \Y_2 \cap \Y_3 \cap \Y_4|, 
$$
and also, 
\begin{equation}  \label{e:def.b}
b := b_{12} + b_{13} + b_{14} + b_{23} + b_{24} + b_{34} - b_{123} - b_{124} - b_{134}  -b_{234} + b_{1234}, \\
\end{equation} 
so that $|\Y_1 \cup \Y_2 \cup \Y_3 \cup \Y_4| = k_1 +k_2 +k_3 +k_4+4-b$ with $b\ge 3$. 
Let $\B$ be the collection of $\bb=(b_{12}, \dots, b_{1234}) \in \Nat_0^{11}$ satisfying the conditions in Case \textbf{(IV)}. 
For a non-empty $\sigma \subset [4]$ and $\Y_i \in (\R^d)^{k_i+1}$, $i=1,\dots,4$, let 
\begin{equation}  \label{e:def.j.sigma}
j_\sigma := \biggl| \,  \bigcap_{i\in \sigma} \Big( \Y_i \setminus \bigcup_{j\in [4]\setminus \sigma} \Y_j  \Big) \, \bigg|
\end{equation}
In particular, $j_\sigma$'s are functions of $\bb$ such that $\sum_{\sigma \subset [4], \, \sigma \neq \emptyset} j_\sigma = |\Y_1 \cup \Y_2 \cup \Y_3 \cup \Y_4|$. Then, Palm theory in the Appendix yields 
\begin{align*}
&\frac{1}{n^2}\, \E \Big[ \sum_{\Y_1 \subset \Pn} \sum_{\Y_2 \subset \Pn} \sum_{\Y_3 \subset \Pn}  \sum_{\Y_4 \subset \Pn} \prod_{i=1}^4h_i(\Y_i)\, \ind{ \text{case \textbf{(IV)} holds}} \Big] \\
&=\sum_{\bb \in \B} \frac{1}{n^2}\, \E \Big[ \sum_{\Y_1 \subset \Pn}  \sum_{\Y_2 \subset \Pn} \sum_{\Y_3 \subset \Pn}  \sum_{\Y_4 \subset \Pn} \prod_{i=1}^4h_i(\Y_i)\,  \one\big\{  |\Y_1\cap \Y_2|=b_{12}, |\Y_1 \cap \Y_3|=b_{13},  \\
&\qquad \qquad \qquad \qquad \qquad \qquad \qquad \qquad \dots, |\Y_1 \cap \Y_2 \cap \Y_3 \cap \Y_4|=b_{1234} \big\} \Big] \\
&=\sum_{\bb \in \B}  \frac{n^{k_1+k_2+k_3+k_4+2-b}}{\prod_{\sigma \subset [4], \, \sigma \neq \emptyset} j_\sigma!}\, 
\E \Big[ \prod_{i=1}^4h_i(\Y_i)\,  \one\big\{  |\Y_1\cap \Y_2|=b_{12}, |\Y_1 \cap \Y_3|=b_{13},   \\
&\qquad \qquad \qquad \qquad \qquad \qquad \qquad \qquad \dots, |\Y_1 \cap \Y_2 \cap \Y_3 \cap \Y_4|=b_{1234} \big\} \Big]. 
\end{align*}

Under the conditions in Case \textbf{(IV)}, at least one of the $b_{ij}$'s is non-zero, so we may assume without loss of generality that $b_{13}>0$. Then we have 
\begin{align*}
&n^{k_1+k_2+k_3+k_4+2-b} \E \Big[ \prod_{i=1}^4h_i(\Y_i)\,  \one\big\{  |\Y_1\cap \Y_2|=b_{12},  \dots, |\Y_1 \cap \Y_2 \cap \Y_3 \cap \Y_4|=b_{1234} \big\} \Big]  \\
&= n^{k_1+k_2+k_3+k_4+2-b} \int_{(\R^d)^{k_1+k_2+k_3+k_4+4-b}} h_1(\bx_0, \bx_1) h_3(\bx_0, \bx_3) h_2(\bx_2) h_4 (\bx_4) \\
&\qquad \qquad \qquad \qquad \qquad \qquad \qquad \qquad \qquad \qquad \times  \prod_{x \in \bigcup_{i=0}^4 \bx_i} f(x) \dif \, (\bx_0 \cup \bx_1 \cup \cdots \cup \bx_4), 
\end{align*}
where $\bx_0$ is a collection of elements in $\R^d$ with $|\bx_0|=b_{13}>0$. In other words, $\bx_0 \in (\R^d)^{b_{13}}$, so that 
$\bx_1\in (\R^d)^{k_1+1-b_{13}}$ and $\bx_3\in (\R^d)^{k_3+1-b_{13}}$ with $\bx_1 \cap \bx_3 = \emptyset$. 
Moreover, $\bx_2 \in (\R^d)^{k_2+1}$ and $\bx_4\in (\R^d)^{k_4+1}$, such that if $\bx_2 \cap \bx_4 =\emptyset$, then $\bx_i \cap (\bx_0 \cup \bx_1 \cup \bx_3) \neq \emptyset$ for $i=2,4$, and if $\bx_2 \cap \bx_4 \neq \emptyset$, then $(\bx_2 \cup \bx_4) \cap (\bx_0 \cup \bx_1 \cup \bx_3)\neq \emptyset$. 

Now, let us perform a change of variables by $\bx_i = x\one +s_n \by_i$ for $i=0,\dots,4$, where $\one$ is a vector with all entries $1$, and the first element of $\by_0$ is taken to be $0$. In addition to this, we apply the translation and scale invariance of $h_i$'s to get 
\begin{align*}
&n^{k_1+k_2+k_3+k_4+2-b} \E \Big[ \prod_{i=1}^4h_i(\Y_i)\,  \one\big\{  |\Y_1\cap \Y_2|=b_{12},  \dots, |\Y_1 \cap \Y_2 \cap \Y_3 \cap \Y_4|=b_{1234} \big\} \Big]  \\
&=n^{k_1+k_2+k_3+k_4+2-b}s_n^{d(k_1+k_2+k_3+k_4+3-b)} \int_{\R^d} \int_{(\R^d)^{k_1+k_2+k_3+k_4+3-b}} \hspace{-10pt} h_{t_2,s}^{k_1}(\by_0, \by_1) h_{s, t_1}^{k_3}(\by_0, \by_3) \\
&\qquad \qquad \qquad \qquad \times h_{t_2,s}^{k_2}(\by_2) h_{s, t_1}^{k_4}(\by_4) \prod_{y \in \bigcup_{i=0}^4 \by_i} f(x+s_n y) \dif \big(  (\by_0 \cup \dots \cup \by_4)\setminus \{ 0 \} \big)  \dif x. 
\end{align*}
Using $ns_n^d=1$, together with the trivial bounds $h_{t_2,s}^{k_2}(\by_2) \le h_T^{k_2}(\by_2)$, $h_{s, t_1}^{k_4}(\by_4)\le h_T^{k_4}(\by_4)$, and $f(x+s_n y) \le \| f \|_\infty$, one can bound the last expression by 
\begin{align}
&\| f \|_\infty^{k_1+k_2+k_3+k_4+3-b} \int_{(\R^d)^{k_1+k_2+k_3+k_4+3-b}} \hspace{-10pt} h_{t_2,s}^{k_1}(\by_0, \by_1) h_{s, t_1}^{k_3}(\by_0, \by_3) \notag \\
&\qquad \qquad \qquad \qquad \qquad \qquad \qquad \qquad \qquad \times  h_T^{k_2}(\by_2) h_T^{k_4}(\by_4) \dif \big(  (\by_0 \cup \dots \cup \by_4)\setminus \{ 0 \} \big) \notag  \\
&= \| f \|_\infty^{k_1+k_2+k_3+k_4+3-b} \int_{(\R^d)^{k_1+k_3+1-b_{13}}} \hspace{-10pt} h_{t_2,s}^{k_1}(\by_0, \by_1) h_{s, t_1}^{k_3}(\by_0, \by_3) \notag \\
&\qquad \quad \times \bigg\{ \int_{(\R^d)^{k_2+k_4+2-b+b_{13}}}  \hspace{-10pt} h_T^{k_2}(\by_2) h_T^{k_4}(\by_4)  \dif \big( (\by_2 \cup \by_4) \setminus (\by_0 \cup \by_1 \cup \by_3)  \big) \bigg\} \dif \big( \by_0\setminus \{ 0 \} \big)\dif \by_1 \dif \by_3.  \label{e:tight.bound.integral}
\end{align}
Suppose $h_T^{k_2}(\by_2) h_T^{k_4}(\by_4) =1$, such that 
\begin{equation}  \label{e:cond.y2.y4}
\by_2 \cap \by_4 \neq \emptyset, \ \ \by_2 \cap (\by_0 \cup \by_1 \cup \by_3) = \emptyset, \ \ \by_4 \cap (\by_0 \cup \by_1 \cup \by_3) \neq \emptyset. 
\end{equation}
Then, there exists $y'\in \by_4 \cap (\by_0 \cup \by_1 \cup \by_3)$ such that all points in $\by_2$ are at distance at most $4T$ from $y'$. 
Since $y'$ itself lies within distance $2T$ from the origin (recall that the first element of $\by_0$ is $0$), we conclude that all points in $\by_2 \cap \by_4$ are at distance at most $6T$ from the origin. As $b_{13} \le k_1 + k_3 +1$ and $b \ge 3$, we have 
\begin{align}
&\int_{(\R^d)^{k_2+k_4+2-b+b_{13}}}  \hspace{-10pt} h_T^{k_2}(\by_2) h_T^{k_4}(\by_4)  \dif \big( (\by_2 \cup \by_4) \setminus (\by_0 \cup \by_1 \cup \by_3)  \big) \label{e:distance.6T}\\
&\le m\big( B(0,6T) \big)^{k_2+k_4+2-b+b_{13}} = \big( (6T)^d \theta_d \big)^{k_2+k_4+2-b+b_{13}} \le \big( (6T)^d \theta_d \big)^{k_1+k_2+k_3 + k_4}. \notag
\end{align}
If $\by_2$ and $\by_4$ do not satisfy \eqref{e:cond.y2.y4}, it is still easy to check \eqref{e:distance.6T}. 

Applying \eqref{e:distance.6T}, along with Lemma \ref{l:hkl_cov} $(ii)$, one can bound \eqref{e:tight.bound.integral} by 
\begin{align*}
&\| f \|_\infty^{k_1+k_2+k_3+k_4+3-b}  \big( (6T)^d \theta_d \big)^{k_1+k_2+k_3 + k_4} \times 36 (k_1k_3)^6 \big( (2T)^d \theta_d \big)^{2(k_1+k_3)} (t_2^d -t_1^d)^2 \\
&\le 36 (k_1 k_2 k_3 k_4)^6 \big( (6T)^d \theta_d \| f \|_\infty \big)^{3(k_1 + k_2 + k_3+k_4)} (t_2^d -t_1^d)^2. 
\end{align*}

Thus, we conclude that 
\begin{align*}
&\frac{1}{n^2}\, \E \Big[ \sum_{\Y_1 \subset \Pn} \sum_{\Y_2 \subset \Pn} \sum_{\Y_3 \subset \Pn}\sum_{\Y_4 \subset \Pn} \prod_{i=1}^4h_i(\Y_i)\, \ind{ \text{case \textbf{(IV)} holds}} \Big]  \\
&\le 36 (k_1 k_2 k_3 k_4)^6 \big( (6T)^d \theta_d \| f \|_\infty \big)^{3(k_1 + k_2 + k_3+k_4)} \sum_{\bb\in \B} \frac{1}{\prod_{\sigma \subset [4], \, \sigma \neq \emptyset}j_\sigma !}\, (t_2^d -t_1^d)^2. 
\end{align*}
To complete the proof, we need to show that 
$$
\sum_{k_1 \le k_2 \le k_3 \le k_4}  (k_1 k_2 k_3 k_4)^6 \big( (6T)^d \theta_d \| f \|_\infty \big)^{3(k_1 + k_2 + k_3+k_4)} \sum_{\bb\in \B} \frac{1}{\prod_{\sigma \subset [4], \, \sigma \neq \emptyset}j_\sigma !} <\infty. 
$$
As seen in the calculation at \eqref{e:summable}, the term $(k_1 k_2 k_3 k_4)^6 \big( (6T)^d \theta_d \| f \|_\infty \big)^{3(k_1 + k_2 + k_3+k_4)}$ is negligible,  while proving
$$
\sum_{k_1 \le k_2 \le k_3 \le k_4}  \sum_{\bb\in \B} \frac{1}{\prod_{\sigma \subset [4], \, \sigma \neq \emptyset}j_\sigma !} <\infty
$$
is straightforward.

\remove{\ed{For $\sigma \subset [4]$ and finite sets $\Y_i \subset \R^d$, $i \in [4]$, let 
\begin{equation} \label{e:jsigma}
j_\sigma := \bigg| \big(\bigcap_{i \in \sigma} \Y_i \big) \cap \big(\bigcap_{i \in [4]\setminus\sigma} \Y_i^c \big) \bigg|,
\end{equation}
or the cardinalities of the \emph{atoms} of $\Y_1 \cup \Y_2 \cup \Y_3 \cup \Y_4$. We observe the convention $\cap_{i\in \emptyset} \Y_i = \R^d$.} Let us set $j  := k_1+k_2+k_3+k_4+4 - \card{\Y_1 \cup \Y_2 \cup \Y_3 \cup \Y_4}$. With the assistance of Lemma \ref{l:palm}, we can see that 
\begin{align}
&\frac{1}{n^2} \E \Big[ \sum_{\Y_1 \subset \Pn} \cdots \sum_{\Y_4 \subset \Pn} h_1(\Y_1) h_2(\Y_2) h_3(\Y_3) h_4(\Y_4)\, \ind{ \text{case \textbf{(IV)} holds}} \Big] \notag \\
&=  \frac{1}{n^2} \frac{n^{k_1+k_2+k_3+k_4+4 - j}}{G(\Y_1, \Y_2, \Y_3, \Y_4)} \E\big[h_1(\Y_1) h_2(\Y_2) h_3(\Y_3) h_4(\Y_4)\, \ind{ \text{case \textbf{(IV)} holds}}\big], \label{e:case4}
\end{align}
where we have 
\[
G(\Y_1, \Y_2, \Y_3, \Y_4) = \prod_{\substack{\sigma \subset [4], \\ \sigma \neq \emptyset}} j_\sigma!,
\]
with $\sum_{\sigma \subset [4], \, \sigma \neq \emptyset} j_\sigma = |\Y_1 \cup \Y_2 \cup \Y_3 \cup \Y_4| = k_1 + k_2 + k_3 + k_4 + 4 - j$. \st{a product of factorials that depends on the number of common elements within pairwise, triple-wise, and quadruple-wise intersections of the $\Y_i$'s, $i\in[4]$}. \ed{Let $\ell = \sum_{\sigma \supset \{1,3\}} j_\sigma = |\Y_1 \cap \Y_3|$, and define $|\mathbf{x}_0| = \ell$, $|\mathbf{x}_1| = k_1+1-\ell$ and $|\mathbf{x}_3| = k_3+1-\ell$. We shall assume $\ell > 0$ but this is not necessary -- in any case, what matters is that there will exist $i = 1,2$ and $j=3,4$ such that $\Y_i \cap \Y_j \neq \emptyset$ so that $h^{k_i}_{n,t_2,s}$ and $h^{k_j}_{n,s,t_1}$ have at least one common argument in \eqref{e:case4}. The proof is analogous in these instances. The vectors $\mathbf{x}_2$ and $\mathbf{x}_4$ of elements in $\R^d$ have length $k_2+1$ and $k_4+1$ respectively and at least one element from $\mathbf{x}_0$, $\mathbf{x}_1$ or $\mathbf{x}_3$. Thus, we have
\begin{align}
&n^{k_1+k_2+k_3+k_4+2 - j}\E\big[h_1(\Y_1) h_2(\Y_2) h_3(\Y_3) h_4(\Y_4)\, \ind{ \text{case \textbf{(IV)} holds}}\big] \label{e:moment_iv} \\
&\qquad \qquad= n^{k_1+k_2+k_3+k_4+2 - j}\int_{(\R^d)^{k_1 + \dots + k_4 + 4 - j}} h_1(\x_0, \x_1)h_3(\x_0, \x_3) h_2(\mathbf{x}_2) h_4(\mathbf{x}_4) \prod_{i=1}^5 f(\x_i) \dif{\x_i} \notag \\
&\qquad \qquad\leq \int_{(\R^d)^{k_1 + \dots + k_4 + 3 - j}} h^{k_1}_{t_2,s}(\y_0, \y_1)h^{k_3}_{s,t_1}(\y_0, \y_3) h^{k_2}_T(\y_2) h^{k_4}_T(\y_4) \prod_{i=1}^5 f(\x_i + s_n\y_i ) \dif{\y_i}, \notag
\end{align} 
after making the change of variable $\x_i \mapsto x + s_n\y_i$ where $x$ is the first element of $\x_0$, so that after applying the translation and scale invariance of the $h^k_t$, we get $\y_0 = (0, y_1, \dots, y_{\ell' -1})$. If $h^{k_i}_T(\y_i) = 1$, $i = 2,4$, then we must have that a point of $y \in \y_i$ is at distance at most $2T$ from a point $y' \in \y_i$ which is also in $\y_0$, $\y_1$ or $\y_3$. As $y'$ within distance $2T$ of the origin, then $\norm{y} \leq \norm{y-y'} + \norm{y'} \leq 4T$. So we can bound \eqref{e:moment_iv} by 
\begin{align*}
\norm{f}_{\infty}^{k_1 + \dots + k_4 + 3 - j} ((4T)^d\theta_d)^{k_2 + k_4 +2 - j + \ell} \int_{(\R^d)^{k_1 + \dots + k_4 + 3 - j}} h^{k_1}_{t_2,s}(\y_0, \y_1)h^{k_3}_{s,t_1}(\y_0, \y_3) \dif{\y_0} \dif{\y_1} \dif{\y_3}
\end{align*}

}Using Lemma \ref{l:hkl_cov} $(ii)$, we can again get a bound of the form $C_{k_1,k_2,k_3,k_4} (t_2^d-t_1^d)^2$ for \eqref{e:case4}, \ed{where $C_{k_1, k_2, k_3, k_4}$ is a bound on a sum over the $j_\sigma$ and $j$.} As shown above, it is also straightforward to see that $C_{k_1,k_2,k_3,k_4}$ is summable over $k_1,k_2,k_3, k_4$. 
Hence, \eqref{e:mod_cond} is established and so $\xn(t)$ is tight.}
\end{proof}

\subsection{Proof of H\"older continuity of $\mathcal H$}
\begin{proof}[Proof of H\"older continuity in Theorem \ref{t:main}]
Since $\mathcal H(t)-\mathcal H(s)$ has a normal distribution for $0 \le s < t<\infty$, we have for every $m\in \Nat$, 
$$
\E\Big[ \big( \mathcal{H}(t) - \mathcal{H}(s) \big)^{2m} \Big] = \prod_{i=1}^m (2i-1) \Big( \E\big[ \big( \mathcal{H}(t) - \mathcal{H}(s) \big)^{2} \big] \Big)^m,
$$
Proposition \ref{p:cov.asym} ensures that $\big( \sum_{k=0}^M(-1)^k \mathcal H_k(t), \, M\in \Nat_0 \big)$ constitutes a Cauchy sequence in $L^2(\Omega)$. Therefore we have
\begin{align*}
\E\big[ \big( \mathcal{H}(t) - \mathcal{H}(s) \big)^{2}\big] &=\lim_{M\to\infty} \E \bigg[ \Big( \sum_{k=0}^M (-1)^k \big(\mathcal H_k(t)-\mathcal H_k(s) \big) \Big)^2 \bigg] \\
&\le \bigg[ \sum_{k=0}^\infty \bigg\{ \E \Big[ \big( \mathcal H_k(t) - \mathcal H_k(s) \big)^2 \Big]  \bigg\}^{1/2}  \bigg]^2,
\end{align*}
where the second line is due to the Cauchy-Schwarz inequality. 
We see at once that
\begin{align}
\E\big[ \big( \mathcal{H}_k(t) - \mathcal{H}_k(s) \big)^{2} \big] &= \Psi_{k,k}(t, t) - 2\Psi_{k,k}(t, s) + \Psi_{k,k}(s,s) \notag\\
&\le  \Psi_{k,k}(t, t)  - \Psi_{k,k}(t, s) =\sum_{j=1}^{k+1} \big( \psi_{j,k,k}(t,t) - \psi_{j,k,k}(t,s) \big) \label{e:last.step}
\end{align} 
by monotonicity due to \eqref{e:monotonicity.indicator} and symmetry of $\Psi_{k,k}(\cdot, \cdot)$ in its arguments.  Now, we note that 
\begin{align*}
\psi_{j,k,k}(t, t)  - \psi_{j,k,k}(t, s) &= \frac{\int_{\R^d} f(x)^{2k+2-j} \dif{x}}{j! ((k+1-j)!)^2} \\
&\times  \int_{(\R^d)^{k+1-j}}  \int_{(\R^d)^{k+1-j}} \int_{(\R^d)^{j-1}}  h_t^k (0,\by_0, \by_1)h_{t,s}^k (0,\by_0,\by_2)\dif \by_0 \dif \by_1 \dif \by_2. 
\end{align*}
Applying a bound $h_t^k(0,\by_0,\by_1) \le \prod_{y\in \by_1} \one \{ \|y\| \le 2T \}$ followed by integrating out $\by_1$, as well as using Lemma \ref{l:hkl_cov} $(i)$, we get
\begin{align*}
\psi_{j,k,k}(t, t)  - \psi_{j,k,k}(t, s) &\le \frac{k^2}{T^d j! \big( (k+1-j)! \big)^2}\, (a_T)^{2k+1-j} (t^d-s^d) \\
&\le \frac{dk^2}{T j! \big( (k+1-j)! \big)^2}\,(a_T)^{2k+1-j} (t-s), 
\end{align*}
where $a_T$ is given in \eqref{e:def.at}. 
Substituting this back into \eqref{e:last.step}, we obtain
\begin{align*}
\E\big[ \big( \mathcal{H}_k(t) - \mathcal{H}_k(s) \big)^{2} \big] &\le \frac{dk^2}{T} \sum_{j=1}^{k+1} \frac{(a_T)^{2k+1-j}}{j! \big( (k+1-j)! \big)^2}\, (t-s) \\
&\le \frac{dk^2}{T(k+1)! a_T}\, \big( a_T (1+a_T) \big)^{k+1} (t-s). 
\end{align*}
Therefore, we conclude that
\begin{align*}
\E\Big[ \big( \mathcal{H}(t) - \mathcal{H}(s) \big)^{2m} \Big] \le \prod_{i=1}^m (2i-1) \Big( \frac{d}{Ta_T} \Big)^m \bigg( \sum_{k=0}^\infty \frac{k\big( a_T(1+a_T)\big)^{(k+1)/2} }{\sqrt{(k+1)!}} \bigg)^{2m} (t-s)^m. 
\end{align*}
One can easily check that the infinite sum on the right hand side converges via the ratio test. As a result, we can apply the Kolmogorov continuity theorem \cite{karatzasshreve}. This implies that there exists a continuous version of $(\mathcal{H}_k(t), \, 0 \leq t \leq T)$ with H{\"o}lder continuous sample paths on $[0,T]$ with any exponent $\gamma \in [0, (m-1)/2m)$. As $m$ is arbitrary, we are done by letting $m\to\infty$. 
\end{proof}

\section{Appendix}

We briefly cite the necessary Palm theory for Poisson processes for an easy reference for the proofs in Section \ref{proofs}. 

\begin{lem}[Lemma 8.1 in \cite{owadaFCLT} and Theorems 1.6, 1.7 in \cite{penr}] \label{l:palm}
Suppose $\Pn$ is a Poisson point process on $\R^d$ with intensity $nf$. Further, for every $k_i \in \Nat_0$, $i=1,\dots,4$, let $h_i(\Y)$ be a real-valued measurable bounded function defined for $\Y\in (\R^d)^{k_i+1}$. By a slight abuse of notation we let $\Y_i$ be collections of $k_i+1$ $\iid$ points with density $f$ on the right hand side of each equation below. We have the following results: \\
$(i)$
$$
\E\Big[ \sum_{\Y_1 \subset \Pn} h_1(\Y_1)\Big] = \frac{n^{k_1+1}}{(k_1+1)!}\, \E[h_1(\Y_1)]. 
$$
$(ii)$ For every $\ell\in \{ 0,\dots,(k_1\wedge k_2)+1 \}$, 
\begin{align*}
&\E \Big[ \sum_{\Y_1\subset \Pn} \sum_{\Y_2\subset \Pn} h_1(\Y_1)h_2(\Y_2)\, \one \big\{ |\Y_1\cap \Y_2|=\ell \big\} \Big] \\
&\quad = \frac{n^{k_1+k_2+2-\ell}}{\ell!(k_1+1-\ell)!(k_2+1-\ell)!}\, \E \big[  h_1(\Y_1)h_2(\Y_2)\, \one \big\{ |\Y_1\cap \Y_2|=\ell \big\} \big]. 
\end{align*}
$(iii)$ For every $\bb = (b_{12}, b_{13}, \dots, b_{1234})\in \Nat_0^{11}$, we have
\begin{align*}
&\E \Big[ \sum_{\Y_1\subset \Pn} \sum_{\Y_2\subset \Pn}\sum_{\Y_3\subset \Pn} h_1(\Y_1)h_2(\Y_2)h_3(\Y_3)\, \\
&\qquad \times \one \big\{ |\Y_1\cap \Y_2|=b_{12}, \, |\Y_1\cap \Y_3|=b_{13}, \, |\Y_2\cap \Y_3|=b_{23}, \, |\Y_1\cap \Y_2 \cap \Y_3|=b_{123}  \big\} \Big] \\
&= \frac{n^{k_1+k_2+k_3+3-b_{12}-b_{13}-b_{23}+b_{123}}}{\prod_{\sigma\subset [3], \, \sigma \neq \emptyset}j_\sigma !}\, \E \big[  h_1(\Y_1)h_2(\Y_2)h_3(\Y_3) \\
&\qquad \times \one \big\{ |\Y_1\cap \Y_2|=b_{12}, \, |\Y_1\cap \Y_3|=b_{13}, \, |\Y_2\cap \Y_3|=b_{23}, \, |\Y_1\cap \Y_2 \cap \Y_3|=b_{123}  \big\} \Big], 
\end{align*}
where 
$$
j_\sigma = \bigg| \bigcap_{i\in \sigma} \Big( \Y_i \setminus \bigcup_{j \in [3]\setminus \sigma}\Y_j \Big)  \bigg|. 
$$
$(iv)$ Furthermore, we have 
\begin{align*}
&\E \Big[ \sum_{\Y_1\subset \Pn} \sum_{\Y_2\subset \Pn}\sum_{\Y_3\subset \Pn}  \sum_{\Y_4\subset \Pn} h_1(\Y_1)h_2(\Y_2)h_3(\Y_3)h_4(\Y_4)\, \\
&\qquad \times \one \big\{ |\Y_i\cap \Y_j|=b_{ij}, \, 1\le i <j \le 4, \ |\Y_i\cap \Y_j \cap \Y_k|=b_{ijk}, \, 1 \le i < j < k\le 4, \\
&\qquad  \qquad \qquad \qquad \qquad \qquad \qquad |\Y_1\cap \Y_2\cap \Y_3 \cap \Y_4|=b_{1234} \big\} \Big] \\
&=\frac{n^{k_1+k_2+k_3+k_4+4-b}}{\prod_{\sigma\subset [4], \, \sigma \neq \emptyset}j_\sigma !}\, \E \Big[ h_1(\Y_1)h_2(\Y_2)h_3(\Y_3)h_4(\Y_4)\, \one \big\{ |\Y_i\cap \Y_j|=b_{ij}, \, 1\le i <j \le 4,  \\
&\qquad \qquad \qquad \quad |\Y_i\cap \Y_j \cap \Y_k|=b_{ijk}, \, 1 \le i < j < k\le 4, \ |\Y_1\cap \Y_2\cap \Y_3 \cap \Y_4|=b_{1234} \big\} \Big], 
\end{align*}
where $b$ and $j_\sigma$ are defined at \eqref{e:def.b} and \eqref{e:def.j.sigma}, respectively.
\end{lem}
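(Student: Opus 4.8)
The plan is to derive all four identities from the multivariate Mecke equation (the Palm formula) for Poisson point processes; indeed one could simply cite Theorems~1.6 and 1.7 of \cite{penr} together with Lemma~8.1 of \cite{owadaFCLT}, but it is worth recording how the computation goes. Recall that if $\Pn$ is Poisson on $\R^d$ with intensity measure $nf(x)\,\dif x$, then for every $m\in\Nat$ and every non-negative measurable $g$ on $(\R^d)^m$,
\[
\E\Big[ \sum_{x_1,\dots,x_m\in\Pn}^{\neq} g(x_1,\dots,x_m) \Big] = n^m \int_{(\R^d)^m} g(x_1,\dots,x_m)\prod_{i=1}^m f(x_i)\,\dif x_i = n^m\,\E\big[ g(Z_1,\dots,Z_m) \big],
\]
where the sum runs over ordered $m$-tuples of \emph{distinct} points of $\Pn$ and $Z_1,\dots,Z_m$ are i.i.d.\ with density $f$. (The classical Mecke identity permits $g$ to depend additionally on the residual configuration, but none of our integrands does, so this form suffices.)

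For $(i)$, each unordered $(k_1+1)$-subset $\Y_1\subset\Pn$ corresponds to exactly $(k_1+1)!$ ordered tuples of distinct points, and $h_1$ is symmetric; dividing by $(k_1+1)!$ and applying the displayed identity with $m=k_1+1$ gives the claim. For $(ii)$, once the constraint $|\Y_1\cap\Y_2|=\ell$ is imposed, specifying the pair $(\Y_1,\Y_2)$ is the same as specifying three pairwise disjoint blocks of points of $\Pn$ --- the $\ell$ shared points, the $k_1+1-\ell$ points exclusive to $\Y_1$, and the $k_2+1-\ell$ points exclusive to $\Y_2$ --- in total $k_1+k_2+2-\ell$ distinct points. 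Converting the subset-sums into sums over ordered distinct tuples introduces the factor $\ell!\,(k_1+1-\ell)!\,(k_2+1-\ell)!$ for the internal orderings of the three blocks, which is exactly the denominator in the statement, and the Mecke identity with $m=k_1+k_2+2-\ell$ supplies the power $n^{k_1+k_2+2-\ell}$ and the i.i.d.\ expectation.

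Parts $(iii)$ and $(iv)$ are the same argument carried out with more indices. Fixing an admissible overlap vector $\bb$, the union $\Y_1\cup\dots\cup\Y_r$ (with $r=3$ or $r=4$) is partitioned into ``atoms'' indexed by the non-empty $\sigma\subset[r]$, the $\sigma$-atom being the set of points that lie in $\Y_i$ for exactly the indices $i\in\sigma$; its cardinality is the quantity $j_\sigma$ of \eqref{e:def.j.sigma}, and $\sum_{\sigma}j_\sigma=|\Y_1\cup\dots\cup\Y_r|=k_1+\dots+k_r+r-b$ with $b$ as in \eqref{e:def.b}. The number of internal orderings of the atoms is $\prod_\sigma j_\sigma!$, so after converting the subset-sums to distinct-tuple sums and invoking Mecke with $m=k_1+\dots+k_r+r-b$ one recovers the prefactor $n^{\,k_1+\dots+k_r+r-b}/\prod_\sigma j_\sigma!$ and the i.i.d.\ expectation of the same constrained integrand.

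The one genuinely delicate point is the combinatorial bookkeeping behind $(iii)$ and $(iv)$: one must check that, for a fixed admissible $\bb$, the event $\{|\Y_i\cap\Y_j|=b_{ij},\ |\Y_i\cap\Y_j\cap\Y_k|=b_{ijk},\ |\Y_1\cap\Y_2\cap\Y_3\cap\Y_4|=b_{1234}\}$ is in bijective correspondence with a choice of pairwise disjoint atoms of the prescribed sizes $j_\sigma$, and that the inclusion--exclusion relation \eqref{e:def.b} is precisely what makes $\sum_\sigma j_\sigma$ equal the number of distinct Poisson points being summed over. Granting this identification, the remainder is a mechanical application of the Mecke equation.
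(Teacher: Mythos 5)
Your derivation is correct, but note that the paper contains no proof of this lemma to compare against: it is stated in the Appendix purely as a citation of Lemma 8.1 in \cite{owadaFCLT} and Theorems 1.6--1.7 in \cite{penr}. Your argument via the multivariate Mecke (factorial moment measure) formula --- converting each subset-sum with a prescribed intersection pattern into a sum over ordered tuples of distinct Poisson points, at the cost of the factor $\prod_{\sigma}j_\sigma!$ for the internal orderings of the atoms, and checking by inclusion--exclusion that $\sum_\sigma j_\sigma = k_1+\cdots+k_r+r-b$ matches the number of distinct points --- is precisely the standard proof underlying those cited results, and the combinatorial bookkeeping you flag as the delicate point is indeed the only nontrivial step; it works exactly as you describe because the points of an ordered distinct tuple determine the atoms, and hence the intersection cardinalities, uniquely.
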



\vspace{12pt}
\noindent \textbf{Acknowledgements}: The authors cordially thank the two anonymous referees and the anonymous editor for their insightful and in-depth comments. These suggestions have allowed us to improve the clarity and flow of the paper. TO's research is partially supported by the National Science Foundation (NSF) grant, Division of Mathematical Science (DMS), \#1811428.

\bibliographystyle{APT.bst}
\bibliography{EulerCharacteristicProcess}
\end{document}